\DeclareMathOperator{\rad}{\text{rad}}
\DeclareMathOperator{\Hom}{\text{Hom}}
\DeclareMathOperator{\md}{\operatorname{mod}}
\DeclareMathOperator{\Aa}{\mathcal{A}}
\DeclareMathOperator{\dimk}{\operatorname{dim}_k}
\DeclareMathOperator{\End}{\operatorname{End}}
\newcommand{\lra}{\longrightarrow}
\newcommand{\benu}{\begin{enumerate}}
\newcommand{\enu}{\end{enumerate}}
\newcommand{\Co}{{\mathcal C}}
\newcommand{\I}{\mathcal{I}}
\newcommand{\dCo}{{\mathcal C}^*}
\theoremstyle{thmstyleone}%
\newtheorem{theorem}{Theorem}
\newtheorem{proposition}[theorem]{Proposition}%
\theoremstyle{thmstyletwo}%
\newtheorem{example}{Example}%
\newtheorem{remark}{Remark}%
\theoremstyle{thmstylethree}%
\newtheorem{definition}{Definition}%
\begin{document}

\title[Representations]{Representations of Generalized Bound Path Algebras}


\author[1]{\fnm{Viktor} \sur{Chust}}\email{viktorch@ime.usp.br}

\author[2]{\fnm{Flávio} \sur{U. Coelho}}\email{fucoelho@ime.usp.br}


\affil[1]{\orgdiv{ORCID: 0000-0003-4931-4222. Institute of Mathematics and Statistics}, \orgname{University of São Paulo}, \orgaddress{\street{R. do Matão, 1010}, \city{São Paulo}, \postcode{05508-090}, \state{São Paulo}, \country{Brazil}}}

\affil[2]{\orgdiv{ORCID: 0000-0002-1292-621X. Institute of Mathematics and Statistics}, \orgname{University of São Paulo}, \orgaddress{\street{R. do Matão, 1010}, \city{São Paulo}, \postcode{05508-090}, \state{São Paulo}, \country{Brazil}}}



\abstract{
The concept of generalized path algebras was introduced in (Coelho, Liu, 2000). Roughly speaking, these algebras are constructed in a similar way to that of the path algebras over a quiver, the difference being that we assign an algebra to each vertex of the quiver and consider paths intercalated with elements from these algebras. Then we use concatenation of paths together with the algebra structure in each vertex to define multiplication. The representations of a generalized path algebra were described in one of the main results of (Ibáñez Cobos et al., 2008), in terms of the representations of the algebras used in its construction. In this article, we continue our investigation started in (Chust, Coelho, 2021) and extend the result mentioned above to describe the representations of the generalized bound path algebras, which are a quotient of generalized path algebras by an ideal generated by relations. In particular, the representations oassociated with the projective and injective modules are described.
}

\keywords{generalized path algebras, generalized bound path algebras, representations of generalized path algebras, representations of generalized bound path algebras}

\pacs[MSC Classification]{16G10,16G20,16E10}

\maketitle

\section{Introduction}\label{sec1}

It is a well-established fact that any finite dimensional basic algebra $A$ over an algebraically closed field $k$ can be seen as the quotient of a path algebra, that is, $ A \cong kQ/I$, where $Q$ is a quiver and $I$ is an admissible ideal of $kQ$ (see for instance \cite{AC, ARS}).  In \cite{CLiu}, Coelho and Liu studied a generalization of such construction. There, it is assigned an algebra to each vertex of a given quiver $Q$ instead of just assigning the base field. The multiplication in such a generalization  will be given not only by the concatenation of paths on the quiver but also by those of the algebras associated with the vertices. 

More specifically, let $\Gamma$ denote a quiver and  $\Aa=\{A_i : i \in \Gamma_0\}$ denote a family of basic algebras of finite dimension over an algebraically closed field $k$ indexed by the set $\Gamma_0$ of the vertices of $\Gamma$. Consider also a set of relations $I$ on the paths of $\Gamma$. In \cite{CC1}, to such a data we assigned a generalized bound path algebra $\Lambda = k(\Gamma,\Aa,I)$ with a natural multiplication (see preliminaries for details).

In \cite{CLiu}, where it is considered the particular case when $I = 0$,  the main interest was more of ring-theoretic nature, but clearly, such a construction can be also very useful from the point of view of Representation Theory. In  \cite{CC1}, we start our work in this direction for the general case. Observe that  any algebra $A$ can be naturally realized as a generalized bound path algebra in two ways. Firstly, the well-known description as the usual quotient of a path algebra. But also, $A$ can be seen by using a quiver with a sole vertex and no arrows and the algebra itself assigned to it. Since for most algebras, these are the only possibilities, one can wonder for which algebras it is possible to describe them as  generalized bound path algebras in a different way from these two above (we call it a {\bf non-trivial simplification of $A$}). Such a description could be useful once one aims to look at properties of a given algebra from those of smaller ones. We deal with this problem in \cite{CC1}. 

Here, following the same strategies of our previous work, the focus will be on the representations of a generalized bound path algebra. When $I= 0$, this has been considered in 
\cite{ICNLP} and we shall generalize their results here (Theorem~\ref{th:rep_icnlp}). Descriptions of the representation of the projective, injective and simple modules are also given.  

This paper is organized as follows. Section 2 below is devoted to the preliminaries needed along the paper. In Section 3 we prove the above mentioned theorem which describes the representations of a given generalized bound path algebra. After establishing useful ideas in Section 4, Section 5 is devoted to the description of the projective, injective and simple modules. 

In a forthcoming paper \cite{CC3}, we shall look at the homological relations between the algebras $A_i$ and the whole algebra.

\section{Preliminaries} 

We shall here recall some basic notions and establish some notations needed along the paper. We indicate the books \cite{AC, ARS} where details on Representation Theory can be found. For an algebra, we shall mean an associative and unitary basic algebra of finite dimension over an algebraically closed field $k$. Unless otherwise stated, the modules considered here are right modules.

\subsection{Quivers and path algebras}

\label{subsec:quivers}

A {\bf  quiver} $Q$  is given by $(Q_0, Q_1, s,e)$ where $Q_0$ is the set of {\bf vertices}, $Q_1$ is the set of {\bf arrows} and $s,e \colon Q_1 \lra Q_0$ are functions which indicate, for each arrow $\alpha \in Q_1$, the {\bf starting vertex} $s(\alpha)\in Q_0$ of $\alpha$ and the {\bf ending vertex} $e(\alpha) \in Q_0$ of $\alpha$. Naturally, given a quiver $Q$ one can assign a {\bf path algebra} $kQ$ with a $k$-basis given by all paths of $Q$ and multiplication on that basis defined by concatenation. Even when $Q$ is finite (that is, when $Q_0, Q_1$ are finite sets), the corresponding algebra could not be finite dimensional. However, a well-known result established by Gabriel states that given an algebra $A$, there exists a finite quiver $Q$ and a set of relations on the paths of $Q$ which generates an admissible ideal $I$ such that $A \cong kQ/I$ (see \cite{AC} for details). 

Along this paper we will assume that the quivers are finite. 

\subsection{Generalized path algebras} 

We shall now recall the definition of a generalized path algebra given in \cite{CLiu}.

Let $\Gamma= (\Gamma_0,\Gamma_1,s,e)$ be a quiver and $\mathcal{A}=(A_i)_{i \in \Gamma_0}$ be a family of algebras, indexed by $\Gamma_0$.   An \textbf{$\mathcal{A}$-path of length $n$} over $\Gamma$ is defined as follows. If $n = 0$, it is just an element of $\bigcup_{i \in \Gamma_0} A_i$, and, if $n>0$, it is a  sequence of the form
$$a_1 \beta_1 a_2 \ldots a_n \beta_n a_{n+1}$$
where $\beta_1 \ldots \beta_n$ is an ordinary path over $\Gamma$, $a_i \in A_{s(\beta_i)}$ if $i \leq n$, and $a_{n+1} \in A_{e(\beta_n)}$.  Denote by $k[\Gamma,\mathcal{A}]$ the $k$-vector space spanned by all $\mathcal{A}$-paths over $\Gamma$. We shall give it a structure of algebra as follows. 

Firstly, consider the quotient vector space $k(\Gamma,\mathcal{A})=k[\Gamma,\mathcal{A}]/M$, where $M$ is the subspace generated by all elements of the form
$$(a_1 \beta_1 \ldots \beta_{j-1}(a^1_j+ \ldots+a^m_j)\beta_j a_{j+1} \ldots \beta_n a_{n+1}) - \sum_{l=1}^m (a_1 \beta_1 \ldots \beta_{j-1} a_j^l \beta_j \ldots \beta_n a_{n+1})$$
or, for $\lambda \in k$,
$$(a_1 \beta_1 \ldots \beta_{j-1}( \lambda a_j) \beta_j a_{j+1} \ldots \beta_n a_{n+1})- \lambda\cdot (a_1 \beta_1 \ldots \beta_{j-1} a_j \beta_j a_{j+1} \ldots \beta_n a_{n+1})$$

Now, consider the multiplication  in $k(\Gamma,\mathcal{A})$  induced by the multiplications of the $A_i$'s and by composition of paths. Namely, it is defined by linearity and the following rule:
$$(a_1 \beta_1 \ldots \beta_n a_{n+1})(b_1 \gamma_1 \ldots \gamma_m b_{m+1}) = a_1 \beta_1 \ldots \beta_n (a_{n+1} b_1) \gamma_1 \ldots \gamma_m b_{m+1}$$
if $e(\beta_n) = s(\gamma_1)$, and 
$$(a_1 \beta_1 \ldots \beta_n a_{n+1})(b_1 \gamma_1 \ldots \gamma_m b_{m+1}) = 0 $$
otherwise.

With this multiplication, we call $k(\Gamma,\mathcal{A})$  the   \textbf{generalized path algebra} over $\Gamma$ and $\mathcal{A}$.

\begin{remark}
It should be easy to see that the ordinary path algebras are a particular case of generalized path algebras, simply by taking $A_i = k$ for every $i \in \Gamma_0$.
\end{remark}

Note that the generalized path algebra $k(\Gamma,\Aa)$ is an associative algebra. And since we are assuming the quivers to be finite, it also has an identity element, which is equal to $\sum_{i \in \Gamma_0} 1_{A_i}$. Finally, it is easy to observe that $k(\Gamma,\Aa)$ is finite-dimensional over $k$ if and only if so are the algebras $A_i$ and if $\Gamma$ is acyclic. 

\begin{remark}
\label{obs:gpa_tensor_algebra}

As observed in \cite{CLiu}, if $k(\Gamma,\Aa)$ is a generalized path algebra as defined above, then it is a tensor algebra: if $A_{\Aa} = \prod_{i \in \Gamma_0} A_i$ is the product of the algebras in $\Aa$, then there is an $(A_{\Aa}-A_{\Aa})$-bimodule $M_{\Aa}$ such that $k(\Gamma,\Aa) \cong T(A_{\Aa},M_{\Aa})$.
\end{remark}

\subsection{Generalized bound path algebras (gbp-algebras)}

Following \cite{CC1}, we shall extend the definition of generalized path algebras to allow them to have relations.  In doing so, these algebras will be called \textbf{generalized bound path algebras} or \textbf{gbp-algebras} to abbreviate. As observed in \cite{CC1}, the idea of taking the quotient of a generalized path algebra by an ideal of relations has already been studied by Li Fang (see \cite{FLi1} for example). However, the concept dealt with in \cite{CC1} and here is slightly different, since in order to prove the results below, we consider an ideal of relations which is in general bigger roughly speaking.

Observe that if $A_i \in \Aa$, then, as explained in Subsection~\ref{subsec:quivers}, there is a quiver $\Sigma_i$ such that $A_i \cong k\Sigma_i/ \Omega_i$ where $\Omega_i$ is an admissible ideal of $k\Sigma_i$. Let now $I$ be a finite set of relations over $\Gamma$ which generates an admissible ideal in $k\Gamma$.  Consider the ideal $(\Aa(I))$ generated by the following subset of  $k(\Gamma,\Aa)$:
\begin{align*}
\Aa(I)&= \left\{ \sum_{i = 1}^t \lambda_i  \beta_{i1} \overline{\gamma_{i1}} \beta_{i2} \ldots \overline{\gamma_{i(m_i-1)}} \beta_{im_i} : \right.\\
&\left. \sum_{i = 1}^t \lambda_i \beta_{i1} \ldots \beta_{im_i} \text{  is a relation in } I 
\text{ and }\gamma_{ij}\text{ is a path in }\Sigma_{e(\beta_{ij})} \right\}
\end{align*}

The quotient $\frac{k(\Gamma,\mathcal{A})}{(\Aa(I))}$ is said to be a \textbf{generalized bound path algebra (gbp-algebra)}. To simplify the notation, we may also write $\frac{k(\Gamma,\mathcal{A})}{(\Aa(I))}=k(\Gamma,\Aa,I)$. When the context is clear, we may denote the set $\Aa(I)$ simply by $I$.

\subsection{Notations}

We are going to use the following notation in this article: $\Gamma$ will always be an acyclic quiver, $\Aa=\{A_i : i \in \Gamma_0\}$ will denote a family of basic algebras of finite dimension over an algebraically closed field $k$, and $I$ will be a set of relations in $\Gamma$ generating an admissible ideal in the path algebra $k\Gamma$. We will also denote by $\Lambda = k(\Gamma,\Aa,I)$ the generalized bound path algebra (gbp-algebra) obtained from these objects. Also, $A_{\Aa}$ will denote the product algebra $\prod_{i \in \Gamma_0} A_i$.  For the purpose of simplifying notation, we are also going to denote the identity element of the algebras $A_i$ by $1_i$ instead of $1_{A_i}$.

\section{Representations}

The aim of this section is to prove Theorem~\ref{th:rep_icnlp} below, which is an extension of Theorem 2.4 from \cite{ICNLP}. As already mentioned above, this result will be of key importance here, and sometimes we will be using it without further clarification.

Based on \cite{ICNLP}, we start by defining what are generalized representations. However, before this we need to do a remark about the notation used here:

\begin{remark}
Generally speaking, if $A$ is an algebra and $M$ is a vector space, an action of $A$ over $M$ which turns $M$ into an $A$-module is equivalent to a homomorphism of algebras $\phi: A \rightarrow \End_k M$. (This correspondence is given by $\phi(a)(m) = m.a$ for all $a \in A$ and $m \in M$). That way, if we understand this correpondence as being canonical, then, at least in the concepts to be treated below, an element $a$ of $A$ could denote either the element itself or $\phi(a)$, which is the endomorphism given by right translation through $a$: $m \mapsto m.a$ for all $m \in M$. This shall be done in order to simplify the notations.
\end{remark}

\begin{definition} Let $\Lambda = k(\Gamma,\Aa,I)$ be a generalized bound path algebra. 

(a) A {\bf representation} of $\Lambda$ is given by 
$ ((M_i)_{i \in \Gamma_0},(M_{\alpha})_{\alpha \in \Gamma_1})$
where
\begin{enumerate}

\item[(i)]  for every $i \in \Gamma_0$, $M_i$ is an $A_i$-module;
\item[(ii)] for every arrow $\alpha \in \Gamma_1$, $M_{\alpha}: M_{s(\alpha)} \rightarrow M_{e(\alpha)}$ is a $k$-linear transformation. 
\item[(iii)] it satisfies any relation $\gamma$ of $I$. That is, if $\gamma = \sum_{i = 1}^t \lambda_t \alpha_{i1} \alpha_{i2} \ldots \alpha_{in_1}$ is a relation in $I$ with $\lambda_i \in k$ and $\alpha_{ij} \in \Gamma_1$, then 
$$\sum_{i = 1}^t \lambda_t M_{\alpha_{in_i}} \circ \overline{\gamma_{in_i}} \circ \ldots \circ M_{\alpha_{i2}} \circ \overline{\gamma_{i2}} \circ M_{\alpha_{i1}} = 0$$
for every choice of paths $\gamma_{ij}$ over $\Sigma_{s(\alpha_{ij})}$, with $1 \leq i \leq t$, $2 \leq j \leq n_i$.
\end{enumerate}

(b) We say that a representation $ ((M_i)_{i \in \Gamma_0},(M_{\alpha})_{\alpha \in \Gamma_1})$ of $\Lambda$ is {\bf finitely generated} if each of the $A_i$-modules $M_i $ is finitely generated. 

(c) Let $M = ((M_i)_{i \in \Gamma_0},(M_{\alpha})_{\alpha \in \Gamma_1})$ and $N = ((N_i)_{i \in \Gamma_0},(N_{\alpha})_{\alpha \in \Gamma_1})$ be representations of  $\Lambda$. A \textbf{morphism of representations} $f: M \rightarrow N$ is given by a tuple $f = (f_i)_{i \in \Gamma_0}$, such that, for every $i \in \Gamma_0$, $f_i: M_i \rightarrow N_i$ is a morphism of $A_i$-modules; and such that, for every arrow $\alpha: i \rightarrow j \in \Gamma_1$, it holds that $f_j M_{\alpha} = N_{\alpha} f_i$, that is, the following diagram comutes:
$$ \xymatrix{M_i \ar[r]^{M_{\alpha}} \ar[d]_{f_i} & M_j \ar[d]^{f_j}\\
N_i \ar[r]_{N_{\alpha}} & N_j} $$
\end{definition}

We shall denote by Rep$_k(\Gamma,\Aa,I)$ (or rep$_k(\Gamma,\Aa,I)$, respectively) the category of the representations (or finitely generated representations) of the algebra $k(\Gamma,\Aa,I)$.

The next step will be to establish the promised equivalence between $k(\Gamma,\Aa,I)$-representations and $\Lambda$-modules, thus generalizing the well-known result of Gabriel for representions and also Theorem 2.4 from \cite{ICNLP}, where the equivalence was established only in the case $I = \emptyset$. The construction of the functors $F$ and $G$ is essentially the same of the original proof, but, for completeness, we will repeat it here. 

\begin{theorem}[compare with \cite{ICNLP},2.4]
\label{th:rep_icnlp}
There is a a $k$-linear equivalence 
$$F: \operatorname{Rep}_k(\Gamma,\Aa,I) \rightarrow \operatorname{Mod} k(\Gamma,\Aa,I)$$
which restricts to an equivalence 
$$F: \operatorname{rep}_k(\Gamma,\Aa,I) \rightarrow \operatorname{mod} k(\Gamma,\Aa,I)$$
\end{theorem}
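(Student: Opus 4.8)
The plan is to build functors in both directions and produce natural isomorphisms identifying the two composites with the identities; this is the standard strategy of Gabriel's theorem, the only genuinely new point being the bookkeeping forced by the relations. I would first define $F$ on objects. Given a representation $M = ((M_i)_{i\in\Gamma_0},(M_\alpha)_{\alpha\in\Gamma_1})$, set $F(M)=\bigoplus_{i\in\Gamma_0}M_i$ as a $k$-vector space and make it a right $\Lambda$-module by prescribing the action of $\Aa$-paths: a length-$0$ path $a\in A_i$ sends $(x_j)_j$ to the tuple whose $i$-th entry is $x_i\cdot a$ (using the $A_i$-module structure of $M_i$) and whose other entries vanish; an arrow $\alpha\colon i\to j$ sends $(x_j)_j$ to the tuple whose $j$-th entry is $M_\alpha(x_i)$; and a general path $a_1\beta_1 a_2\cdots\beta_n a_{n+1}$ acts by composing these elementary actions in the left-to-right order dictated by the multiplication of $\Lambda$. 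In particular $1_i$ acts as the projection onto $M_i$, so $F(M)\cdot 1_i=M_i$. On morphisms, $F$ sends $f=(f_i)_i$ to $\bigoplus_i f_i$, which is $\Lambda$-linear precisely because of the commuting squares in the definition of a morphism of representations.

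The first thing to check, and the step I expect to be the main obstacle, is that this prescription is well defined on $\Lambda$ and not merely on the free space $k[\Gamma,\Aa]$. Descending through the subspace $M$ that makes the algebra slots additive and $k$-linear is immediate, since each $M_i$ is an $A_i$-module and so the elementary action of $A_i$ is additive and $k$-linear. Descending through the ideal $(\Aa(I))$ is exactly where condition (iii) enters: a generator of $\Aa(I)$ is a combination $\sum_i\lambda_i\,\beta_{i1}\overline{\gamma_{i1}}\beta_{i2}\cdots\overline{\gamma_{i(m_i-1)}}\beta_{im_i}$ whose underlying relation $\sum_i\lambda_i\beta_{i1}\cdots\beta_{im_i}$ lies in $I$ and whose intercalated $\gamma_{ij}$ are arbitrary paths in $\Sigma_{e(\beta_{ij})}$; its action on $F(M)$ is the composite $\sum_i\lambda_i\,M_{\beta_{im_i}}\circ\overline{\gamma_{i(m_i-1)}}\circ\cdots\circ M_{\beta_{i1}}$, which is zero by (iii) for every such choice of the $\gamma_{ij}$. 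It then remains to verify that the action respects the multiplication of $\Lambda$, i.e. that $F(M)$ is genuinely a right module; this reduces to the two facts that concatenation of $\Aa$-paths corresponds to composition of the elementary maps and that the actions of the $A_i$ are associative, so that no real difficulty arises here.

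Next I would define the quasi-inverse $G$. For a right $\Lambda$-module $X$ set $G(X)_i=X\cdot 1_i$; because $\Gamma$ is acyclic, the only paths from $i$ to $i$ are the length-$0$ ones, whence $1_i\Lambda 1_i=A_i$ and $X\cdot 1_i$ is naturally a right $A_i$-module. For an arrow $\alpha\colon i\to j$ define $G(X)_\alpha\colon X1_i\to X1_j$ by $m\mapsto m\cdot\alpha$; this lands in $X1_j$ since $\alpha=1_i\alpha1_j$, and the resulting datum satisfies the relations of $I$ because $(\Aa(I))$ annihilates $X$. A $\Lambda$-linear map is sent to its restrictions to the subspaces $X1_i$. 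To see that $F$ and $G$ are mutually quasi-inverse I would exploit the orthogonal idempotent decomposition $1=\sum_{i\in\Gamma_0}1_i$ of the identity of $\Lambda$. For a representation $M$ one has $G(F(M))_i=\bigl(\bigoplus_j M_j\bigr)\cdot 1_i=M_i$ with structure maps agreeing on the nose, giving $GF\cong\operatorname{id}$; for a module $X$ one has $F(G(X))=\bigoplus_{i}X1_i=X$ as right $\Lambda$-modules, giving $FG\cong\operatorname{id}$. Naturality of both isomorphisms is routine from the definitions on morphisms.

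Finally, for the restricted statement I would use that $\Lambda$ is finite-dimensional over $k$, as $\Gamma$ is finite and acyclic and each $A_i$ is finite-dimensional; hence $\operatorname{mod}k(\Gamma,\Aa,I)$ is exactly the category of finite-dimensional $\Lambda$-modules. Since $F(M)=\bigoplus_i M_i$ and each $A_i$ is finite-dimensional, $F(M)$ is finite-dimensional if and only if every $M_i$ is finite-dimensional, which in turn holds if and only if every $M_i$ is finitely generated over $A_i$, that is, if and only if $M$ is a finitely generated representation. Therefore $F$ and $G$ restrict to the asserted equivalence between $\operatorname{rep}_k(\Gamma,\Aa,I)$ and $\operatorname{mod}k(\Gamma,\Aa,I)$.
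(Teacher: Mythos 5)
Your proposal is correct, and its overall architecture coincides with the paper's: the same functor $F(M)=\bigoplus_{i\in\Gamma_0}M_i$, the same quasi-inverse $G$ built from the idempotents ($G(X)_i=X\cdot 1_i$, arrows acting by right multiplication), the same verification that the generators of $(\Aa(I))$ act by zero via condition (iii) of the definition of representation, and the same finite-dimensionality argument for the restriction to $\operatorname{rep}_k$ and $\operatorname{mod}$. The one genuine difference is how the $k(\Gamma,\Aa)$-module structure on $F(M)$ is established. The paper uses the identification $k(\Gamma,\Aa)\cong T(A_{\Aa},M_{\Aa})$ of Remark~\ref{obs:gpa_tensor_algebra}: it defines only an algebra map $\phi_0$ on $A_{\Aa}$ and a bimodule map $\phi_1$ on $M_{\Aa}$, and the universal property of tensor algebras (\cite{ICNLP}, Lemma 2.1) then produces the algebra homomorphism $\phi\colon k(\Gamma,\Aa)\rightarrow\End_k F(M)$ in one stroke, so that well-definedness and compatibility with the multiplication are automatic. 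You instead prescribe the action $\Aa$-path by $\Aa$-path on the spanning set of $k[\Gamma,\Aa]$ and must then check by hand (i) descent through the linearity subspace defining $k(\Gamma,\Aa)$ and (ii) that the resulting action respects the multiplication of the algebra. Both checks do succeed --- (ii) reduces, for concatenable paths, to associativity of the module action of $A_{e(\beta_n)}$ applied to the merged coefficient $a_{n+1}b_1$, and, for non-concatenable paths, to orthogonality of the vertex components --- but this is exactly the bookkeeping that the universal property is designed to absorb, and your proposal dispatches (ii) in a single sentence; written out in full it would be the bulk of the argument. What your route buys in exchange is self-containedness: it never needs the tensor-algebra realization of $k(\Gamma,\Aa)$ nor the cited lemma, only the definition of the multiplication. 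Everything after this point --- killing the ideal on generators (legitimate in both proofs since the action is multiplicative), naturality, and the finitely generated case --- matches the paper's proof essentially verbatim.
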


\begin{proof}
For a given representation $M = ((M_i)_{i \in \Gamma_0},(M_{\alpha})_{\alpha \in \Gamma_1})$ in  $\operatorname{Rep}_k(\Gamma,\Aa,I)$, define 
$$F(M) = \bigoplus_{i \in \Gamma_0} M_i$$
which will be an object in $\operatorname{Mod} k(\Gamma,\Aa,I)$.\\
We have to define the action of $\Lambda$ over $F(M)$ in such a way that $F(M)$ is indeed an object in $\operatorname{Mod} k(\Gamma,\Aa,I)$. This is equivalent to constructing a homomorphism of algebras $\Phi: \Lambda \rightarrow \End F(M)$. The idea is to use the universal property of tensor algebras (see \cite{ICNLP}, Lemma 2.1).  Let $A_{\Aa}$ and $M_{\Aa}$ be as in Remark~\ref{obs:gpa_tensor_algebra}. \\
First we define a homomorphism of algebras 
$$\phi_0: A_{\Aa} \rightarrow \End_k F(M)$$
given by
$$ \phi_0(a_i)((x_l)_{l \in \Gamma_0}) = (\delta_{li} x_i a_i)_{l \in \Gamma_0}$$
for all $i \in \Gamma_0,$ for all $ a_i \in A_i$ and all $ (x_l)_{l \in \Gamma_0} \in F(M)$, 
where $\delta_{li}$ is a Kronecker's delta. We also define a morphism of $(A_{\Aa} - A_{\Aa})$-bimodules
$$\phi_1: M_{\Aa} \rightarrow \End_k F(M)$$
as follows: for every $\Aa$-path of length 1 $a_i \alpha a_j$, where $\alpha: i \rightarrow j$ is an arrow of $\Gamma$, $a_i \in A_i$, $a_j \in A_j$, and for every tuple $(x_l)_{l \in \Gamma_0} \in F(M)$, define
$$\phi_1(a_i \alpha a_j)((x_l)_{l \in \Gamma_0}) = (\delta_{lj} M_{\alpha} (x_ia_i) a_j)_{l \in \Gamma_0}$$
Now, since $k(\Gamma,\Aa) = T(A_{\Aa},M_{\Aa})$, by the universal property of tensor algebras (\cite{ICNLP},Lemma 2.1), there is a homomorphism of algebras
$$\phi: k(\Gamma,\Aa) \rightarrow \End_k F(M)$$
uniquely determined by the property that $\phi \vert_{A_{\Aa}} = \phi_0$ and $\phi \vert_{M_{\Aa}} = \phi_1$. This shows that $F(M)$ is a $k(\Gamma,\Aa)$-module. In order to show that $F(M)$ is a module over $\Lambda = k(\Gamma,\Aa,I)$, is suffices to show that $\phi(I)=0$, because then, due to the Homomorphism Theorem, $\phi$ induces a homomorphism of algebras $\Phi: k(\Gamma,\Aa)/I \rightarrow \End_k F(M)$. \\
Therefore let us verify that $\phi(I)=0$. Let $\rho = \sum_{r=1}^t \lambda_r \alpha_{r1} \ldots \alpha_{rn_r}$ be a relation in $I$, where $\lambda_r\in k$ and the sequences $\alpha_{r1} \ldots \alpha_{rn_r}$ are paths over $\Gamma$ that start and end at the same vertex. And let, for every $1 \leq r \leq t$ and $2 \leq j \leq n_r$, $\gamma_{rj}$ be a path over $\Sigma_{s(\alpha_{rj})}$. Then:
\begin{align*}
\phi&(\sum_{r = 1}^t \lambda_r \alpha_{r1} \overline{\gamma_{r2}} \alpha_{r2} \ldots \overline{\gamma_{rn_r}} \alpha_{rn_r}) \\
&=\sum_{r = 1}^t \lambda_r \phi(\alpha_{r1} \overline{\gamma_{r2}} \alpha_{r2} \ldots \overline{\gamma_{rn_r}} \alpha_{rn_r}) \\
& = \sum_{r = 1}^t \lambda_r \imath_{e(\alpha_{rn_r})} \circ M_{\alpha_{rn_r}} \circ \overline{\gamma_{rn_r}} \circ \ldots M_{\alpha_{r2}} \circ \overline{\gamma_{r2}} \circ M_{\alpha_{r1}} \circ \pi_{s(\alpha_{r1})} \\
& = \imath_{e(\alpha_{1n_1})} \circ \left(\sum_{r = 1}^t \lambda_r M_{\alpha_{rn_r}} \circ \overline{\gamma_{rn_r}} \circ \ldots M_{\alpha_{r2}} \circ \overline{\gamma_{r2}} \circ M_{\alpha_{r1}}\right) \circ \pi_{s(\alpha_{11})} \\
&=0
\end{align*}
where $\imath$ and $\pi$ denote respectively canonical inclusions and projections, and the last equality above holds because $M$ satisfies $\rho$. We need to see how $F$ acts on morphisms. \\ 
Let $f = (f_i)_{i \in \Gamma_0}: M \rightarrow N$ be a morphism of representations, where $M = ((M_i)_{i \in \Gamma_0},(M_{\alpha})_{\alpha \in \Gamma_1})$ and $N = ((N_i)_{i \in \Gamma_0},(N_{\alpha})_{\alpha \in \Gamma_1})$ are representations satisfying $I$. Then each $f_i: M_i \rightarrow N_i$ is a morphism of $A_i$-modules, and thus we may define a linear map
$$F(f) : F(M) = \bigoplus_{i \in \Gamma_0} M_i \rightarrow F(N) = \bigoplus_{j \in \Gamma_0} N_i$$
by establishing that the $(i,j)$-th coordinate of $F(f)$ is $\delta_{ij} f_i$. It can be shown that $F(f)$ is a morphism of $\Lambda$-modules and that $F$ defined as such is indeed a functor. \\ 
Now we will define that which will be the quasi-inverse functor of $F$:
$$G: \operatorname{Mod} k(\Gamma,\Aa) \rightarrow \operatorname{Rep}_k(\Gamma,\Aa)$$
Let $M$ be a module over $\Lambda$. We need to define a $k(\Gamma,\Aa)$-representation $G(M) = ((M_i)_{i \in \Gamma_0},(\phi_{\alpha})_{\alpha \in \Gamma_1})$ which satisfies $I$.  
\begin{itemize}
\item For each $i \in \Gamma_0$, $M_i$ is defined by $M_i \doteq M\cdot 1_i$ which is clearly an $A_i$-module. 
\item  For each arrow $\alpha: i \rightarrow j \in \Gamma_1$, define the $k$-linear  map $M_{\alpha}:M_i \rightarrow M_j$ given by $\phi_{\alpha}(m) = m\cdot \alpha$. 
\end{itemize}
To show that $G(M)$ thus defined satisfies $I$, let 
$\rho = \sum_{r=1}^t \lambda_r \alpha_{r1} \ldots \alpha_{rn_r}$ be a relation in $I$, where $\lambda_r\in k$ and the sequences $\alpha_{r1} \ldots \alpha_{rn_r}$ are paths over $\Gamma$ that start and end at the same vertex. Also let, for each $1 \leq r \leq t$ and $2 \leq j \leq n_r$, $\gamma_{rj}$ be a path over $\Sigma_{s(\alpha_{rj})}$. Then, for $m \in M_{s(\alpha_{r1})}$,
\begin{align*}
&\left( \sum_{r=1}^t  \lambda_r  M_{\alpha_{rn_r}} \circ \overline{\gamma_{rn_r}} \circ \ldots M_{\alpha_{r2}} \circ \overline{\gamma_{\alpha_{r2}}} \circ M_{\alpha_{r1}} \right) (m) \\
&=\left(\sum_{r=1}^t \lambda_r M_{\alpha_{rn_r}} \circ \overline{\gamma_{rn_r}} \circ \ldots M_{\alpha_{r2}} \circ \overline{\gamma_{r2}} \right)(m \alpha_{r1}) \\
&= \left(\sum_{r=1}^t \lambda_r M_{\alpha_{rn_r}} \circ \overline{\gamma_{rn_r}} \circ \ldots M_{\alpha_{r2}}\right) (m \alpha_{r1} \overline{\gamma_{r2}}) \\
&= \ldots = \sum_{r=1}^t \lambda_r m \alpha_{r1} \overline{\gamma_{r2}} \ldots \overline{\gamma_{rn_r}} \alpha_{rn_r} \\
&= m \left(\sum_{r=1}^t \lambda_r \alpha_{r1} \overline{\gamma_{r2}} \ldots \overline{\gamma_{rn_r}} \alpha_{rn_r} \right) \\
&= 0
\end{align*}
The last equality above holds because the expression that multiplies $m$ is equal to $0$ in $\Lambda$. We have thus shown that $G(M)$ is an object in $\operatorname{Rep}_k(\Gamma,\Aa,I)$. \\ 
Let $g: M \rightarrow N$ be a morphism in $\operatorname{Mod} \Lambda$. We will define its image under $G$:
\begin{align*}
&G(g) = (G(g)_i)_{i \in \Gamma_0} \\
&G(g)_i : M_i \rightarrow N_i\text{, } G(g)_i \doteq g \vert_{M_i}
\end{align*}
It is immediately verified that $G(g)_i$ is well-defined and is a morphism of $A_i$-modules for every $i \in \Gamma_0$. Let us show that $G(g)$ is a morphism of representations. Let $\alpha: i \rightarrow j$ be an arrow in $\Gamma$. Then, for every $m \in M$, $G(g)_j \circ M_{\alpha} (m\cdot 1_i) = G(g)_j (m\alpha) = g(m\alpha) = g(m)\alpha = G(g)_i(m\cdot 1_i)\alpha = N_{\alpha} \circ G(g)_i (m\cdot 1_i)$. Therefore $G(g)_j \circ M_{\alpha} = N_{\alpha} \circ G(g)_i$, that means to say that the following diagram comutes:
\begin{displaymath}
\xymatrix{
M_i \ar[r]^{M_{\alpha}} \ar[d]_{G(g)_i} & M_j \ar[d]^{G(g)_j} \\
N_i \ar[r]_{N_{\alpha}} &N_j
}
\end{displaymath}
Therefore $G(g)$ is a morphism of representations. It is straightforward to show $G$ defined this way is a functor. It is also directly verified that:

\begin{itemize}
\item $F$ and $G$ are quasi-inverse functors and are therefore equivalences.
\item $F$ maps finitely generated representations to finitely generated modules, while $G$ does the opposite. Thus the restrictions of these functors to these subcategories are still quasi-inverse equivalences.
\end{itemize}
\end{proof}

\begin{example}

In this example we will illustrate Theorem~\ref{th:rep_icnlp} above. Let $A$ be the path algebra given by the quiver 

\begin{displaymath}
\xymatrix{. \ar@(ur,dr)[]^{\gamma}}
\end{displaymath}
bound by $\gamma^n = 0$, where $n>1$. Then consider the  gbp-algebra $\Lambda = k(\Gamma,\Aa,I)$, where $\Gamma$ is the quiver below:

\begin{displaymath}
\xymatrix{1 \ar[rr]^{\alpha} &&2 \ar[rr]^{\beta}&&3}
\end{displaymath}
and where $\Aa = \{A_1,A_2,A_3\}$, with $A_1 = A_3 = k$, $A_2 = A$, and $I = (\alpha \beta)$. More simply, $\Lambda$ is the gbp-algebra given by

\begin{displaymath}
\xymatrix{k \ar[rr]^{\alpha} &&A \ar[rr]^{\beta}&&k}
\end{displaymath}
bound by $\alpha \beta = 0$. Using the proof of Theorem~\ref{th:rep_icnlp}, we are going to calculate the representation associated with the projective $\Lambda$-module $P = 1_{A_1}\cdot \Lambda$. \\
We have that $P_1 = P.1_{A_1} = 1_{A_1}.\Lambda.1_{A_1} = (1_{A_1})$ is the $k$-vector space spanned by $1_{A_1}$. Moreover, $P_2 = P.1_{A_2} = 1_{A_1}.\Lambda.1_{A_2} = (\alpha, \alpha \overline{\gamma}, \ldots \alpha \overline{\gamma}^{n-1})$, which is a right $A$-module easily seen to be isomorphic to the regular $A$-module $A$. And also $P_3 = P.1_{A_3} = 1_{A_1}.\Lambda.1_{A_3} = 0$ since $I=(\alpha\beta)$ and thus every $\Aa$-path of the form $\alpha \overline{\gamma}^i \beta$ for $i \geq 0$ is identified with 0 in $\Lambda$.\\
Now we have that $P_{\alpha}$ is given by right multiplication by $\alpha$, so it maps the single element of the basis of $P_1$, which is $1_{A_1}$, to $1_{A_1}. \alpha = \alpha$ in $P_2$.\\
If we identify $P_2 \cong A$ and consider the $k$-basis $\{\overline{1},\overline{\gamma},\ldots, \overline{\gamma}^{n-1}\}$ for $A$, we may conclude that the representation associated with the $\Lambda$-module $P$ is the following:

\begin{displaymath}
\xymatrix{P: & k \ar[rrr]^{\left[ \begin{matrix} 1 & 0 & \ldots & 0 \end{matrix} \right]^T} &&&A \ar[rr]&&0}
\end{displaymath}

\end{example} 
\vspace{.3 cm}

Having obtained the equivalence in Theorem~\ref{th:rep_icnlp} as a tool, we are in conditions to study, over the course of the following sections, the representations associated to simple, projective and injective modules over a gbp-algebra, thus generalizing the well-known description that is done for ordinary path algebras.

\begin{remark}
From now on, we will also be additionally assuming that the modules are always finitely generated.
\end{remark}

\subsection{Opposite algebra}
\label{subsec:opp_alg_duality}

The aim of this subsection is to obtain some useful lemmas involving opposite algebras, opposite quivers and the duality functor. Again we refer to \cite{ARS} for the definition of these concepts. For a quiver $\Gamma$, denote by $\Gamma^{op}$ its opposite quiver (that is, the quiver with the same vertices of $\Gamma$ and with all its arrows reversed). For a set $I$ of relations in $\Gamma$, $I^{op}$ will denote the  set of relations in $\Gamma^{op}$ obtained through inversion of the arrows in $I$. Also, if $\Aa = \{A_i : i \in \Gamma_0\}$ is a family of algebras, denote by  $\Aa^{op} = \{A_i^{op}:i \in \Gamma_0\}$ the set where $A_i^{op}$ is the opposite algebra of $A_i$. With these notations, we have the following:

\begin{proposition}
\label{prop:opp_GPA}
If $\Lambda = k(\Gamma,\Aa, I)$ is a gbp-algebra, then $\Lambda^{op} \cong k(\Gamma^{op},\Aa^{op},I^{op})$. 
\end{proposition}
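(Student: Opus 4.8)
The plan is to construct an explicit isomorphism of $k$-algebras between $\Lambda^{op}$ and $k(\Gamma^{op},\Aa^{op},I^{op})$ by tracking what happens to $\Aa$-paths under the anti-automorphism that defines the opposite algebra. The natural candidate map sends the class of an $\Aa$-path $a_1\beta_1 a_2 \cdots \beta_n a_{n+1}$ in $\Lambda$ to the reversed $\Aa^{op}$-path $a_{n+1}\beta_n^{op}\cdots \beta_1^{op} a_1$ in $k(\Gamma^{op},\Aa^{op})$, where each $\beta_j^{op}$ is the corresponding reversed arrow in $\Gamma^{op}$ and each $a_i$, now sitting in $A_{\bullet}^{op}$, is the same underlying element. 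The key bookkeeping point is that reversing the arrow $\beta_j\colon s(\beta_j)\to e(\beta_j)$ produces $\beta_j^{op}\colon e(\beta_j)\to s(\beta_j)$, so the element $a_i\in A_{s(\beta_i)}$ lands exactly where it belongs in the reversed path, and the vertex algebras $A_i^{op}$ are attached to the same vertices $i\in\Gamma_0 = (\Gamma^{op})_0$.

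\textbf{First} I would carry out the construction at the level of the generalized path algebras without relations, showing $k(\Gamma,\Aa)^{op}\cong k(\Gamma^{op},\Aa^{op})$. Here I can lean on Remark~\ref{obs:gpa_tensor_algebra}: since $k(\Gamma,\Aa)\cong T(A_{\Aa},M_{\Aa})$ is a tensor algebra, its opposite is again a tensor algebra, namely $T(A_{\Aa}^{op},M_{\Aa}^{op})$ where $M_{\Aa}^{op}$ is $M_{\Aa}$ regarded as an $(A_{\Aa}^{op}\text{-}A_{\Aa}^{op})$-bimodule with the sides swapped. One then checks that this reversed bimodule is precisely the arrow bimodule $M_{\Aa^{op}}$ associated with $\Gamma^{op}$ and $\Aa^{op}$ (reversing an arrow swaps its source and target $A_i$-actions, which is exactly what passing to the opposite bimodule does). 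Alternatively, and perhaps more transparently for this audience, I would define the map directly on the spanning $\Aa$-paths as above, verify it respects the relations $M$ generating the quotient $k(\Gamma,\Aa)$ (additivity and $k$-linearity in each vertex slot), and confirm it is multiplicative for the opposite multiplication by a direct computation: the product $x\cdot_{op} y = yx$ of two $\Aa$-paths concatenates $y$ then $x$ with the middle $A_i$-elements multiplied in $A_{\bullet}$, and under reversal this becomes the honest concatenation of the reversed paths with those same elements multiplied in $A_{\bullet}^{op}$.

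\textbf{Then} I would pass to the quotients. By definition $\Lambda = k(\Gamma,\Aa)/(\Aa(I))$ and the target is $k(\Gamma^{op},\Aa^{op})/(\Aa^{op}(I^{op}))$. Since the opposite of a quotient algebra $(R/J)^{op}$ is $R^{op}/J^{op}$, it suffices to show that the isomorphism $k(\Gamma,\Aa)^{op}\cong k(\Gamma^{op},\Aa^{op})$ carries the image of the ideal $(\Aa(I))$ onto $(\Aa^{op}(I^{op}))$. Concretely, a generator $\sum_i \lambda_i\,\beta_{i1}\overline{\gamma_{i1}}\cdots\beta_{im_i}$ of $\Aa(I)$, built from a relation $\sum_i\lambda_i\beta_{i1}\cdots\beta_{im_i}\in I$, is sent by reversal to $\sum_i\lambda_i\,\beta_{im_i}^{op}\cdots\overline{\gamma_{i1}}^{op}\beta_{i1}^{op}$; I must check that the underlying reversed relation $\sum_i\lambda_i\beta_{im_i}^{op}\cdots\beta_{i1}^{op}$ is exactly a relation in $I^{op}$ (which is how $I^{op}$ was defined) and that each reversed $\gamma_{ij}^{op}$ is a path in $\Sigma_{e(\beta_{ij}^{op})}$ for the opposite vertex algebra $A_{\bullet}^{op}$, so that the image is a generator of $\Aa^{op}(I^{op})$. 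Because $I$ is admissible iff $I^{op}$ is, and because the quiver of $A_i^{op}$ is $\Sigma_i^{op}$ with the correspondingly reversed admissible ideal, this matching is bijective on generators.

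\textbf{The main obstacle} I anticipate is not conceptual but notational and verificational: namely, keeping the indices and the source/target assignments consistent when reversing the intercalated paths $\overline{\gamma_{ij}}$ living inside the vertex algebras, and confirming that the local correspondence $A_i^{op}\cong k\Sigma_i^{op}/\Omega_i^{op}$ is compatible with the global reversal so that an element $\overline{\gamma_{ij}}$ which is a path in $\Sigma_{e(\beta_{ij})}$ becomes a legitimate element of the algebra attached to the matching vertex in $\Gamma^{op}$. Once the map is shown to be a well-defined multiplicative bijection sending the defining relations of $\Lambda^{op}$ bijectively onto those of $k(\Gamma^{op},\Aa^{op},I^{op})$, it descends to the desired isomorphism of quotient algebras, completing the proof.
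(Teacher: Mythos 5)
Your proposal is correct, and its direct route coincides with the paper's own proof: the same path-reversal map $[a_1\beta_1\cdots\beta_n a_{n+1}]\mapsto[a_{n+1}\beta_n\cdots\beta_1 a_1]$, the same verification that it kills the linearity relations defining $k(\Gamma,\Aa)$, the same anti-multiplicativity computation turning the middle product $a_{n+1}b_1$ into the opposite product, and the same final observation that reversal carries the generators $\Aa(I)$ bijectively onto $\Aa^{op}(I^{op})$ so the isomorphism descends to the quotients. The tensor-algebra variant you sketch first (via $T(A_{\Aa},M_{\Aa})^{op}\cong T(A_{\Aa}^{op},M_{\Aa}^{op})$) is a reasonable alternative for the relation-free step, but it is not what the paper does, and your fallback direct argument is the one that matches.
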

\begin{proof}
As recalled in the preliminaries, the generalized path algebra $k(\Gamma,\Aa)$ is a quotient of a vector space denoted as $k[\Gamma,\Aa]$ by a subspace generated by linearity relations. Let us then use the following auxiliar notation: $k(\Gamma,\Aa) \doteq k[\Gamma,\Aa]/\sim$. In order to avoid confusion, let us also denote the equivalence class (relatively to $\sim$) of an $\Aa$-path $x$ by $[x]$. With these notations we can define a $k$-linear map
$$\overline{\phi}:k[\Gamma,\Aa] \rightarrow k(\Gamma^{op},\Aa^{op})$$
by defining  it in the $k$-basis of $k[\Gamma,\Aa]$:
$$\overline{\phi}(a_0 \beta_1 a_1 \ldots a_{r-1} \beta_r a_r) \doteq [a_r \beta_r a_{r-1} \ldots a_1 \beta_1 a_0]$$
for each $\Aa$-path $a_0 \beta_1 a_1 \ldots a_{r-1} \beta_r a_r$. Then we must show that $\sim \subseteq \ker \overline{\phi}$. Indeed:
$$ \overline{\phi}(a_0 \beta_1 a_1 \ldots (a_i^1+\ldots+a_i^s) \ldots a_{r-1} \beta_r a_r - \sum_{j=1}^s a_0 \beta_1 a_1 \ldots a_i^j \ldots a_{r-1} \beta_r a_r)  = $$ 
$$ = \overline{\phi}(a_0 \beta_1 a_1 \ldots (a_i^1+\ldots+a_i^s) \ldots a_{r-1} \beta_r a_r) - \sum_{j=1}^s \overline{\phi}(a_0 \beta_1 a_1 \ldots a_i^j \ldots a_{r-1} \beta_r a_r) = $$  
$$ = [a_r \beta_r a_{r-1} \ldots (a_i^1+\ldots+a_i^s) \ldots a_1 \beta_1 a_0]- \sum_{j=1}^s [a_r \beta_r a_{r-1} \ldots a_i^j \ldots a_1 \beta_1 a_0] = 0 $$
and, for  $\lambda \in k$,
$$ \overline{\phi} (a_0 \beta_1 a_1 \ldots \lambda a_i \ldots a_{r-1} \beta_r a_r - \lambda(a_0 \beta_1 a_1 \ldots a_i \ldots a_{r-1} \beta_r a_r))  = $$
$$ = \overline{\phi}(a_0 \beta_1 a_1 \ldots \lambda a_i \ldots a_{r-1} \beta_r a_r) - \lambda \overline{\phi}(a_0 \beta_1 a_1 \ldots a_i \ldots a_{r-1} \beta_r a_r) = $$ 
$$ = [a_r \beta_r a_{r-1} \ldots \lambda a_i \ldots a_1 \beta_1 a_0] - \lambda [a_r \beta_r a_{r-1} \ldots a_i \ldots a_1 \beta_1 a_0] = 0 $$
We have just shown that there is a $k$-linear map
$$\phi : k(\Gamma,\Aa) \rightarrow k(\Gamma^{op},\Aa^{op})$$
that satisfies
$$\phi([a_0 \beta_1 a_1 \ldots a_{r-1} \beta_r a_r]) = [a_r \beta_r a_{r-1} \ldots a_1 \beta_1 a_0]$$
It is easy to see that $\phi$ is bijective. 
To conclude the first part of the statement, it remains to show that $\phi$ is an anti-homomorphism of algebras. It is easy to see that $\phi$ preserves the identity element. We will thus show that it antipreserves multiplication. Let $a = [a_0 \beta_1 a_1 \ldots a_{r-1} \beta_r a_r]$ and $b= [b_0 \gamma_1 b_1 \ldots b_{s-1} \gamma_s b_s]$ be the classes of two $\Aa$-paths. If $e(\beta_r) \neq s(\gamma_1)$, it is straightforward to show that $\phi(ab) = 0 = \phi(b)\phi(a)$. So suppose that $e(\beta_r) = s(\gamma_1)$. In this case,
\begin{align*}
\phi(ab) &= \phi([a_0 \beta_1 a_1 \ldots a_{r-1} \beta_r a_r][b_0 \gamma_1 b_1 \ldots b_{s-1} \gamma_s b_s]) \\
&= \phi ([a_0 \beta_1 a_1 \ldots a_{r-1} \beta_r (a_r.b_0) \gamma_1 b_1 \ldots b_{s-1} \gamma_s b_s]) \\
&= [b_s \gamma_s b_{s-1} \ldots b_1 \gamma_1 (a_r.b_0) \beta_r a_{r-1} \ldots a_1 \beta_1 a_0] \\
&= [b_s \gamma_s b_{s-1} \ldots b_1 \gamma_1 (b_0._{op}a_r) \beta_r a_{r-1} \ldots a_1 \beta_1 a_0] \\
&= [b_s \gamma_s b_{s-1} \ldots b_1 \gamma_1 b_0][a_r \beta_r a_{r-1} \ldots a_1 \beta_1 a_0] \\
&= \phi([b_0 \gamma_1 b_1 \ldots b_{s-1} \gamma_s b_s])\phi([a_0 \beta_1 a_1 \ldots a_{r-1} \beta_r a_r]) = \phi(b)\phi(a)
\end{align*}
This proves that $k(\Gamma,\Aa)$ is anti-isomorphic to $k(\Gamma^{op},\Aa^{op})$ via $\phi$, which is the same to say that $k(\Gamma,\Aa)^{op}$ is isomorphic to $k(\Gamma^{op},\Aa^{op})$.
To conclude the proof, we realize that the map $\phi$ defined above satisfies $\phi(I) = I^{op}$, and the statement follows directly.
\end{proof}

\subsection{Duality} We now use the results of the previous subsection to dualize the representations of the gbp-algebra $\Lambda$. Denote by D = Hom$_k(-,k)$ the duality functor. 

\begin{proposition}
\label{prop:dual of a repr}
Let $\Lambda = k(\Gamma,\Aa, I)$ be a gbp-algebra. If $((M_i)_{i \in \Gamma_0},(\phi_{\alpha})_{\alpha \in \Gamma_1})$ is the representation of the $\Lambda$-module $M$, then the representation of the  $\Lambda^{op}$-module D$M$ is isomorphic to 
$(D(M_i)_{i \in \Gamma_0}, D(\phi_{\alpha})_{\alpha \in \Gamma_1})$. 
\end{proposition}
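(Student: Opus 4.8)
The plan is to compute the representation of the $\Lambda^{op}$-module $DM$ directly, applying the quasi-inverse functor $G$ from the proof of Theorem~\ref{th:rep_icnlp} to the algebra $k(\Gamma^{op},\Aa^{op},I^{op})$, which is identified with $\Lambda^{op}$ through the anti-isomorphism $\phi$ of Proposition~\ref{prop:opp_GPA}. Recall that $DM = \Hom_k(M,k)$ carries its standard left $\Lambda$-module structure, equivalently a right $\Lambda^{op}$-module structure given by $(f\cdot a^{op})(m) = f(m\cdot a)$ for $a \in \Lambda$; transporting this along $\phi$, an element $y$ of $k(\Gamma^{op},\Aa^{op})$ acts by $(f\cdot y)(m) = f(m\cdot \phi^{-1}(y))$.

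First I would handle the vertices. Since $\phi$ fixes each vertex idempotent, the idempotent $1_i$ of $\Lambda^{op}$ coincides with that of $\Lambda$, and the decomposition $M = \bigoplus_{i\in\Gamma_0} M_i$ with $M_i = M\cdot 1_i$ dualizes to $DM = \bigoplus_{i\in\Gamma_0} D(M_i)$, where $D(M_i)$ is identified with the functionals supported on $M_i$. Under this identification the component $DM\cdot 1_i$ prescribed by $G$ is exactly $D(M_i)$, and since $A_i^{op}$ acts on it by $(g\cdot a^{op})(x) = g(x\cdot a)$, this is precisely the natural right $A_i^{op}$-module (that is, dual) structure on $D(M_i)$. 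This yields the vertex part of the desired isomorphism canonically, by restriction.

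Next I would track the arrows. An arrow $\alpha\colon i\to j$ of $\Gamma$ becomes $\alpha^{op}\colon j\to i$ in $\Gamma^{op}$, and $\phi^{-1}$ sends the length-one path $\alpha^{op}$ back to $\alpha$. Hence the map that $G$ assigns to $\alpha^{op}$, namely $g\mapsto g\cdot\alpha^{op}$, is computed by $(g\cdot\alpha^{op})(m) = g(m\cdot\alpha)$; restricting to $M_i$ and using $\phi_\alpha(x) = x\cdot\alpha$, this reads $x\mapsto g(\phi_\alpha(x)) = (g\circ\phi_\alpha)(x)$, which is exactly $D(\phi_\alpha)(g)$. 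Thus $(DM)_{\alpha^{op}} = D(\phi_\alpha)\colon D(M_j)\to D(M_i)$ under the vertex identifications, and assembling the vertex isomorphisms into a morphism of representations gives the claimed isomorphism; its compatibility with every arrow is precisely the computation just made.

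The main obstacle, and the only place requiring genuine care, is the bookkeeping: one must consistently keep track of the anti-isomorphism $\phi$ (so that right multiplication by an arrow in $\Lambda^{op}$ corresponds, via $\phi^{-1}$, to right multiplication by $\alpha$ in $\Lambda$), while simultaneously checking that the reversal of arrows in $\Gamma^{op}$ matches the reversal of direction produced by the dual map $D(\phi_\alpha)\colon D(M_j)\to D(M_i)$. Verifying that the relations $I^{op}$ hold is automatic here, since $DM$ is genuinely a $\Lambda^{op}$-module and $G$ always outputs a representation satisfying the relations; so no separate relation-check is needed beyond what Theorem~\ref{th:rep_icnlp} already guarantees.
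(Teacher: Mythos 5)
Your proposal is correct and follows essentially the same route as the paper: both apply the quasi-inverse functor $G$ of Theorem~\ref{th:rep_icnlp} to $DM$ viewed as a module over $\Lambda^{op}\cong k(\Gamma^{op},\Aa^{op},I^{op})$ (Proposition~\ref{prop:opp_GPA}), identify $(DM)_i = DM\cdot 1_i$ with $D(M_i)$ by restriction of functionals, and verify arrow compatibility via the computation $(g\cdot\alpha^{op})(x)=g(\phi_\alpha(x))=D(\phi_\alpha)(g)(x)$. The only cosmetic differences are that the paper spells out in detail that each restriction map is an $A_i^{op}$-module isomorphism, and it justifies the relation check by the duality properties of $D$, whereas you correctly observe it comes for free from $G$ landing in representations satisfying $I^{op}$.
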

\begin{proof} We need to show that the representations $(((DM)_i)_{i \in \Gamma_0},((DM)_{\alpha})_{\alpha \in \Gamma_1})$ and $(D(M_i)_{i \in \Gamma_0}, D(\phi_{\alpha})_{\alpha \in \Gamma_1})$ are isomorphic.
It is useful to recall how the quasi-inverse equivalences $F$ e $G$ discussed in the proof of Theorem~\ref{th:rep_icnlp} were like. Let $i \in \Gamma_0$. First of all, note that
$$DM = \Hom_k(M,k) \text{, thus } (DM)_i = 1_{i}(\Hom_k(M,k))$$
$$D(M_i) = \Hom_k(M_i,k) = \Hom_k(M.1_{i},k)$$
We can define
\begin{align*}
f_i: 1_{i} \Hom_k (M,k) & \rightarrow \Hom_k(M\cdot 1_{i},k) \\
1_{i}\cdot g & \mapsto g\vert_{M\cdot 1_{i}}
\end{align*}
We shall see that $f_i$ is an isomorphism. It is clear that it is well-defined and $k$-linear. To show that 
$f_i$ is a morphism of $A_i^{op}$-modules, let $g \in \Hom_k (M,k)$, $a \in A_i^{op}$ and $x \in M\cdot1_{i}$. Then 
$$f_i(a\cdot 1_{i} g)(x) = (a\cdot g)\vert_{M\cdot 1_{i}}(x) = (a\cdot g)(x) = g(xa) = g(xa1_{i}) = $$
$$ = g\vert_{M\cdot 1_{i}}(xa)  =  f_i(1_{i}g)(xa) = (a\cdot f_i(1_{i}g))(x) $$
which implies that $ f_i(a\cdot 1_{i}g) = a\cdot f_i(1_{i}g)$, as required. \\
Now, to see that $f_i$ is injective, suppose $f_i(1_{i}g)=0$. Then $(1_{i}g)(x) = 0$ for every $x \in M\cdot 1_{i}$ and so $(1_{i}\cdot g)(x) = (1_{i}\cdot g)(x\cdot 1_{i}) = 0$ for every $x \in M$. In particular, $1_{i}\cdot g = 0$, which shows our claim. \\
It remains to see that $f_i$ is surjective. Let $h \in \Hom_k(M\cdot 1_{i},k)$. We know that $M \cong \oplus_{j \in \Gamma_0} M\cdot 1_{j}$. We can thus define a $k$-linear transformation $g \in \Hom_k(M,k)$, $g: \oplus_{j \in \Gamma_0} M\cdot1_{j} \rightarrow k$, $g = (\delta_{ji} h)_{j \in \Gamma_0}$, where $\delta_{ji}$ is a Kronecker's delta. Then, if $x \in M\cdot 1_{i}$, $f_i(1_{i}\cdot g)(x) = g\vert_{M\cdot 1_{i}}(x) = h(x)$. Thus $f_i(1_{i}\cdot g) = h$.
This concludes the proof that $f_i$ is an isomorphism of $A_i$-modules. The next step is to show the commutativity of the diagram 
\begin{displaymath}
\xymatrix{
(DM)_j \ar[r]^{(DM)_{\alpha}} \ar[d]_{f_j} & (DM)_i \ar[d]^{f_i}\\
D(M_j) \ar[r]_{D(\phi_{\alpha})} & D(M_i)
}
\end{displaymath}
For that, let $g \in \Hom_k(M,k)$ e $x \in M$. Then:
\begin{align*}
(f_i \circ (DM)_{\alpha})(1_{j}.g)&(x.1_{i}) =f_i( (DM)_{\alpha}(1_{j}.g))(x.1_{i}) = f_i(1_{i} \alpha g)(x.1_{i}) \\
&= (\alpha g)\vert_{M.1_{i}} (x.1_{i})= (\alpha g) (x.1_{i}) =g(x \alpha) \\
&=g\vert_{M.1_{j}}(x \alpha 1_{j}) = g\vert_{M.1_{j}}(x \alpha) = g\vert_{M.1_{j}}(\phi_{\alpha}(x.1_{i})) \\
&= D(\phi_{\alpha})(g\vert_{M.1_{j}})(x.1_{i}) = D(\phi_{\alpha})(f_j(1_{i}.g))(x.1_{i})) \\
&= (D(\phi_{\alpha}) \circ f_j)(1_{i}.g)(x.1_{i})
\end{align*}
Hence $(f_i \circ (DM)_{\alpha}) = (D(\phi_{\alpha}) \circ f_j)$, as was required. The fact that $DM$ satisfies $I^{op}$ if and only if $M$ satisfies $I$ follows easily from the fact that D is a fully faithful and dense $k$-linear functor.
\end{proof}


\section{Realizing an $A_i$-module as a $\Lambda$-module}
\label{sec:realizing modules}

Let $i \in \Gamma_0$, and let $M$ be a (right) $A_i$-module. In this section we shall see three ways of viewing $M$ as a $\Lambda$-module. The first one is quite natural, while the second one essentially relies on the well-known technique of extension of scalars. Dualizing such a construction, we get a third way. It will be interesting to dedicate different notations for each of the three.

\subsection{The inclusion functors} \label{subsec:inclusion} 
Given an $A_i$-module $M$, define the  $\Lambda$-representation 
${\mathcal{I}}(M) = ((M_j)_{j \in \Gamma_0},(\phi_{\alpha})_{\alpha \in \Gamma_1}) $ given by
$$M_j = 
\begin{cases}
M & \text{ if } j = i \\
0 & \text{ if } j \neq i
\end{cases} \ \ \ \mbox{ and } \ \ \ \phi_{\alpha} = 0 \hspace{1ex}\text{ for all }\alpha \in \Gamma_1. $$

Clearly, because of Theorem~\ref{th:rep_icnlp}, $\I(M)$ yields a $\Lambda$-module, and, since $\I(M)$ and $M$ have the same underlying vector space, we may, by abuse of notation, denote $\I(M) = M$.

Actually, for every vertex $i$ we have a functor $\I_i: \md A_i \rightarrow \md \Lambda$ which we shall call \textbf{inclusion functor}. (We might even denote it simply by $\I$ if it is clear what vertex we are talking about). We have just defined its image on objects, and its image on morphisms is defined obviously. It is also easy to see why $\I$ is called an inclusion functor, because it is covariant and fully faithful.

From now on, unless stated or denoted otherwise, we will always be assuming that we are seeing $M$ as an $\Lambda$-module in this way.

\begin{remark}
\label{obs:simple modules}
It is not difficult to see that simple $A_i$-modules viewed as $\Lambda$-modules are also simple. Conversely, any simple $\Lambda$-module is of this kind. This follows from a counting argument (see \cite{CLiu} for example). So, the description of the simple $\Lambda$-modules is easily done.
\end{remark}

\subsection{Cones} We shall now see another way to view an $A_i$-module $M$ as a $\Lambda$-module. 

Here again, let $k(\Gamma,\Aa) = T(A_{\Aa},M_{\Aa})$ as in Remark~\ref{obs:gpa_tensor_algebra}. Clearly, $M$ is also an $A_{\Aa}$-module (using the action $m \cdot (a_j)_j = m\cdot a_i$ for each $m \in M$ and $(a_j)_{j \in \Gamma_0} \in A_{\Aa}$). 

Since $\Lambda$ is equal to the quotient $k(\Gamma,\Aa)/I$,  and $M_{\Aa}$ is an $(A_{\Aa}-A_{\Aa})$-bimodule, $\Lambda$  is also an $(A_{\Aa}-A_{\Aa})$-bimodule that contains $A_{\Aa}$ as a subalgebra. Therefore it makes sense to consider the extension of scalars of $M$ to $\Lambda$. We shall denote it by $\Co_i(M) = M \otimes_{A_{\Aa}} \Lambda$. Just emphasizing, since $\Lambda$ is a right $\Lambda$-module, $\Co_i(M)$ is a right $\Lambda$-module too. 
\begin{definition}
$\Co_i(M)$ is called \textbf{cone} \index{cone} over $M$.
\end{definition}
The reason why we call it a cone is because of the shape that the representation of $\Co_i(M)$ has, as it will be more transparent after the description that will be done here later. 
\begin{proposition}
\label{cone x direct_sum}
If $M$ and $N$ are $A_i$-modules, then $\Co_i(M \oplus N) \cong \Co_i(M) \oplus \Co_i(N)$.
\end{proposition}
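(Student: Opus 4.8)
The plan is to recognize that the statement is an instance of the general fact that extension of scalars commutes with direct sums, and to reduce the proof to this. Recall from the definition that $\Co_i(-) = (-) \otimes_{A_{\Aa}} \Lambda$, where an $A_i$-module is regarded as an $A_{\Aa}$-module through the action $m \cdot (a_j)_{j} = m \cdot a_i$. Since $\Lambda$ is an $(A_{\Aa}-A_{\Aa})$-bimodule that contains $A_{\Aa}$ as a subalgebra, the functor $\Co_i$ is precisely extension of scalars along the inclusion $A_{\Aa} \hookrightarrow \Lambda$. Such a functor is left adjoint to restriction of scalars, and left adjoints preserve all colimits; in particular they preserve finite coproducts, i.e.\ direct sums. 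This already gives the isomorphism in principle, but I would prefer to exhibit it concretely via the distributivity of the tensor product over direct sums.

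The key steps, in order, would be the following. First I would check that the $A_{\Aa}$-module structure on $M \oplus N$ induced from its $A_i$-module structure coincides with the direct sum of the $A_{\Aa}$-module structures on $M$ and on $N$: since the $A_{\Aa}$-action factors through the projection onto the $i$-th coordinate, the action on $M \oplus N$ is componentwise, so this matches on the nose. Second, I would invoke the canonical isomorphism
$$(M \oplus N) \otimes_{A_{\Aa}} \Lambda \;\cong\; (M \otimes_{A_{\Aa}} \Lambda) \oplus (N \otimes_{A_{\Aa}} \Lambda),$$
which is a standard property of the tensor product of a direct sum of left modules with a fixed bimodule. Third, I would verify that this isomorphism is an isomorphism of right $\Lambda$-modules: the right $\Lambda$-action acts only on the second tensor factor and is therefore unchanged by the splitting of the first factor, so the componentwise isomorphism is $\Lambda$-linear. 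Rewriting the three terms as $\Co_i(M \oplus N)$, $\Co_i(M)$ and $\Co_i(N)$ respectively then yields the desired isomorphism.

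I do not expect a genuine obstacle here; the content of the statement is essentially the recognition that $\Co_i$ is a tensor (extension-of-scalars) functor, after which the result is formal. The only point requiring a small amount of care is the bookkeeping in the first step—confirming that the $A_{\Aa}$-structure on the direct sum is exactly the direct sum of the individual $A_{\Aa}$-structures—since the whole argument hinges on $M \oplus N$ being the coproduct in the category of $A_{\Aa}$-modules and not merely in the category of $A_i$-modules. Once that identification is in place, distributivity of $\otimes_{A_{\Aa}}$ over $\oplus$ closes the argument.
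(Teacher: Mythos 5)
Your proposal is correct and follows exactly the paper's argument: the paper's proof is the one-line observation that $\Co_i(M \oplus N) = (M \oplus N)\otimes_{A_{\Aa}} \Lambda \cong (M \otimes_{A_{\Aa}} \Lambda) \oplus (N \otimes_{A_{\Aa}} \Lambda) = \Co_i(M) \oplus \Co_i(N)$, i.e.\ distributivity of the tensor product over direct sums. Your additional verifications (that the $A_{\Aa}$-structure on $M \oplus N$ is the direct sum of the individual $A_{\Aa}$-structures, and that the canonical isomorphism is right $\Lambda$-linear) are details the paper leaves implicit, but the approach is the same.
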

\begin{proof} Just observe that 
$$\Co_i(M \oplus N) = (M \oplus N)\otimes_{A_{\Aa}} \Lambda \cong (M \otimes_{A_{\Aa}} \Lambda) \oplus (N \otimes_{A_{\Aa}} \Lambda ) = \Co_i(M) \oplus \Co_i(N).$$
\end{proof}


\begin{remark}
\label{obs:cone as formal compositions}
Since we are assuming $\Gamma$ to be acyclic, it will be useful to remark that
$$\Co_i(M) = \left\{ \sum_{\substack{\gamma = \gamma_1 \ldots \gamma_t \text{ is a path in } \Gamma \\ s(\gamma_1) = i}} m^{\gamma} \otimes \overline{\gamma_1 a^{\gamma}_{e(\gamma_1)} \ldots \gamma_t a^{\gamma}_{e(\gamma_t)}} : m^{\gamma} \in M, a^{\gamma}_{e(\gamma_j)} \in A_{e(\gamma_j)} \right\}$$

This equality follows by observing that $\Co_i(M) = M \otimes_{A_{\Aa}} \Lambda = M\cdot 1_i \otimes_{A_{\Aa}} \Lambda  = M \otimes_{A_{\Aa}} 1_i\cdot\Lambda$.
\end{remark}

The next goal of this subsection is to describe the representation associated to the cone $\Co_i(M)$ of an $A_i$-module $M$. 

Let $((M_j)_{j \in \Gamma_0},(\phi_{\alpha})_{\alpha \in \Gamma_1})$ denote the representation of $M$. For each $l \in \Gamma_0$, let $\{a_1^l,\ldots,a_{\dimk A_l}^l\}$ denote a $k$-basis of $A_l$. Also, let $\{m_1,\dots,m_{\dimk M}\}$ be a $k$-basis of $M$. 
\begin{proposition}
\label{prop:formato_dos_cones}
With the notations above, it holds that $M_i = M$, and if $j \in \Gamma_0$ is different from $i$, then $M_j$ is isomorphic to the free $A_j$-module having as basis the set of equivalence classes of the formal sequences of the form
$$m_p  \gamma_1 a_{i_2}^{s(\gamma_2)} \ldots a_{i_r}^{s(\gamma_r)} \gamma_r$$
where $\gamma_1 \ldots \gamma_r$ is a path from $i$ to $j$, $1 \leq p \leq \dimk M$ and $1 \leq i_l \leq \dimk A_{s(\gamma_l)}$ for every $1<l \leq r$.

Moreover, if $\alpha:j \rightarrow j'$ is an arrow, then $\phi_{\alpha}$ is the only linear transformation that satisfies
$$\phi_{\alpha}\left(\overline{m_p  \gamma_1 a_{i_2}^{s(\gamma_2)} \ldots a_{i_r}^{s(\gamma_r)} \gamma_r} a_{i_{r+1}}^j\right) = \overline{m_p  \gamma_1 a_{i_2}^{s(\gamma_2)} \ldots a_{i_r}^{s(\gamma_r)} \gamma_r a_{i_{r+1}}^j \alpha}.$$
\end{proposition}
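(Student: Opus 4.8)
The plan is to read off the representation of $\Co_i(M)$ directly from the quasi-inverse functor $G$ built in the proof of Theorem~\ref{th:rep_icnlp}: that functor sends a $\Lambda$-module $N$ to the representation with spaces $N\cdot 1_j$ and arrow maps given by right multiplication. So I must compute $\Co_i(M)\cdot 1_j$ for each $j$ and describe right multiplication by each arrow. Using the identity $\Co_i(M)=M\otimes_{A_{\Aa}}1_i\Lambda$ from Remark~\ref{obs:cone as formal compositions} and the fact that $1_j$ acts only on the right-hand tensor factor, I get $\Co_i(M)\cdot 1_j=M\otimes_{A_{\Aa}}(1_i\Lambda 1_j)$, which reduces everything to an analysis of the $(A_i,A_j)$-bimodule $1_i\Lambda 1_j$.

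For $j=i$, acyclicity of $\Gamma$ forbids paths of positive length from $i$ to $i$, while every relation in $I$ is a combination of paths of positive length; hence no element of $(\Aa(I))$ meets $1_i\Lambda 1_i$, so $1_i\Lambda 1_i=A_i$ and therefore $M_i\cong M\otimes_{A_{\Aa}}A_i\cong M$. For $j\neq i$, a generic $\Aa$-path from $i$ to $j$ has the form $b_0\gamma_1 b_1\ldots\gamma_r b_r$ with $\gamma_1\ldots\gamma_r$ a path $i\to j$; sliding the initial factor $b_0\in A_i$ across the tensor sign absorbs it into $M$, and expanding each internal factor $b_l$ (for $1\le l\le r-1$) in a fixed $k$-basis of $A_{s(\gamma_{l+1})}$ shows that $M\otimes_{A_{\Aa}}(1_i\Lambda 1_j)$ is spanned, as a right $A_j$-module, by the classes of the formal sequences $m_p\gamma_1 a^{s(\gamma_2)}_{i_2}\ldots a^{s(\gamma_r)}_{i_r}\gamma_r$ of the statement, the trailing factor $b_r\in A_j$ being precisely the right $A_j$-action. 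This simultaneously pins down the arrow maps: for $\alpha\colon j\to j'$, right multiplication gives $\phi_\alpha\bigl(\overline{m_p\gamma_1 a^{s(\gamma_2)}_{i_2}\ldots a^{s(\gamma_r)}_{i_r}\gamma_r}\,a^j_{i_{r+1}}\bigr)=\overline{m_p\gamma_1 a^{s(\gamma_2)}_{i_2}\ldots a^{s(\gamma_r)}_{i_r}\gamma_r a^j_{i_{r+1}}\alpha}$, and since a right $A_j$-linear map is determined by its values on a generating set, this is the unique such transformation.

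The heart of the argument, and the step I expect to be the main obstacle, is to prove that these generators are genuinely \emph{free} over $A_j$, that is, that the relations coming from $(\Aa(I))$ never interfere with the right $A_j$-action. The decisive structural observation is that every generator of $\Aa(I)$ ends in an arrow of $\Gamma$ and carries no algebra element at its target vertex. Writing the free right $A_j$-module $1_i k(\Gamma,\Aa)1_j=F_0\otimes_k A_j$, where $F_0$ is the $k$-span of the generators ending in an arrow (trailing $1_j$), one checks that any element of $1_i(\Aa(I))1_j$ is a right $A_j$-combination of products $x z p'$, with $z$ a generator of $\Aa(I)$ and $p'$ a path-generator into $j$, each of which lies in $F_0$ because it ends in an arrow; hence $1_i(\Aa(I))1_j=W\otimes_k A_j$ for the subspace $W=1_i(\Aa(I))1_j\cap F_0$. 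It follows that $1_i\Lambda 1_j\cong(F_0/W)\otimes_k A_j$ is free over $A_j$, and since tensoring with $M$ over $A_{\Aa}$ commutes with the free $A_j$-factor, $M_j\cong\bigl(M\otimes_{A_{\Aa}}(F_0/W)\bigr)\otimes_k A_j$ is free over $A_j$ with the asserted basis, namely the (distinct, nonzero) classes of the formal sequences above. The earlier example, in which the relation $\alpha\beta$ forces $P_3=0$, illustrates how passing from $F_0$ to $F_0/W$ can delete generators while preserving freeness.
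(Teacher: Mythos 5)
Your proposal is correct and, in outline, follows the same route as the paper: both proofs read the representation of $\Co_i(M)$ off the functor $G$ from Theorem~\ref{th:rep_icnlp} together with Remark~\ref{obs:cone as formal compositions}, handle the case $j=i$ by acyclicity, obtain the spanning set by expanding all coefficients in fixed $k$-bases, and identify $\phi_\alpha$ with right multiplication by $\alpha$. The genuine difference lies at the freeness claim, which is the crux of the statement once relations are present: the paper dispatches it with ``an easy calculation shows that it is isomorphic to the free $A_j$-module having as basis $\{\theta_1,\ldots,\theta_{n_j}\}$'', whereas you actually prove it, by reducing to the bimodule $1_i\Lambda 1_j$, writing $1_i k(\Gamma,\Aa)1_j\cong F_0\otimes_k A_j$, and showing that the corner of the ideal equals $W\otimes_k A_j$ because every generator of $\Aa(I)$ ends in an arrow carrying no algebra element at its target vertex. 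This is exactly the structural reason why the relations can only delete or $k$-linearly combine generators but never create $A_j$-torsion, so your argument supplies the justification that the paper omits: the paper's version buys brevity, yours buys a proof of the one delicate point. Two small repairs are in order. First, the uniqueness of $\phi_\alpha$ should be phrased as the determination of a $k$-linear map by its values on a $k$-spanning set; $\phi_\alpha$ is not right $A_j$-linear (its codomain $M_{j'}$ is an $A_{j'}$-module), so your appeal to $A_j$-linearity is a slip, though the conclusion stands. Second, your parenthetical ``(distinct, nonzero) classes'' still slightly overstates what is proved: a relation such as $\alpha\beta+\gamma\delta$ makes two distinct nonzero classes $k$-linearly dependent, so what your decomposition really yields is that $M_j$ is free on any maximal $k$-linearly independent subset of those classes; this imprecision, however, is already present in the proposition as stated and in the paper's own proof, and is not something you introduced.
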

\begin{proof}
The key idea here is to recall the equivalence $G$ constructed in the proof of Theorem~\ref{th:rep_icnlp}.
By Remark~\ref{obs:cone as formal compositions} above, and by the fact that $\Gamma$ is acyclic,
$$M_i = \Co_i(M)\cdot 1_i \cong \left\{ \sum_{\gamma:i \rightsquigarrow i} m^{\gamma}: m^{\gamma} \in M \right\} = \{m: m \in M\} =M$$
For $j \neq i$, we have that
\begin{align*}
M_j = \Co_i(M)\cdot 1_j &= \left\{  \sum_{\gamma=\gamma_1 \ldots \gamma_{r}:i \rightsquigarrow j} m^{\gamma} \otimes \overline{\gamma_1 a_2^{\gamma} \gamma_2 \ldots a_r^{\gamma} \gamma_r a_{r+1}^{\gamma}} : \right. \\
 & \left. \hspace{1cm} m^{\gamma} \in M, a_l^{\gamma} \in A_{s(\gamma_l)} \hspace{1ex} \forall 1< l \leq r, \text { e } a_{r+1}^{\gamma} \in A_j \right\}
\end{align*}
Since $\{a_1^l,\ldots,a_{\dimk A_l}^l\}$ is a $k$-basis of $A_l$ and $\{m_1,\dots,m_{\dimk M}\}$ is a $k$-basis of $M$, the above expression equals to
\begin{align}
&\operatorname{span}_k \{m_p \otimes \overline{ \gamma_1 a_{i_2}^{s(\gamma_2)} \ldots a_{i_r}^{s(\gamma_r)} \gamma_r} a_{r+1}: \gamma_1 \ldots \gamma_r \text{ is a path } i \rightsquigarrow j, \nonumber \\
& \hspace{7mm} 1 \leq p \leq \dimk M, 1 \leq i_l \leq \dimk A_{s(\gamma_l)} \hspace{1ex} \forall 1< l \leq r, \text { e } a_{r+1} \in A_j\} \label{eq:prop}
\end{align}
If one denotes $\{\theta_1,\ldots,\theta_{n_j}\} = \{m_p \otimes \overline{ \gamma_1 a_{i_2}^{s(\gamma_2)} \ldots a_{i_r}^{s(\gamma_r)} \gamma_r}\}$, then  the expression~\ref{eq:prop} is equal to 
$$\operatorname{span}_k \{ \theta_l a: 1 \leq l \leq n_j, a \in A_j \}.$$
An easy calculation shows that it is isomorphic to the free $A_j$-module having as basis $\{\theta_1,\ldots,\theta_{n_j}\}$, as we wanted to prove. \\
Let $\alpha: j \rightarrow j'$ be an arrow in $\Gamma_1$. Again, by Theorem~\ref{th:rep_icnlp}, $\phi_{\alpha}:  M_j \rightarrow M_{j'}$ is given by
\begin{align*}
\phi_{\alpha}:  \Co_i(&M) 1_j \rightarrow \Co_i(M) 1_{j'} \\
& m 1_j \mapsto m \alpha
\end{align*}
with $m \in \Co_i(M)$. Therefore $\phi_{\alpha}$ has the form given in the statement, concluding the proof.
\end{proof}

\begin{remark}
\label{obs:formato_dos_cones}
If $I = 0$, then it is easier to see how the representation of $\Co_i(M)$ looks like: it holds that $M_i = M$, and if $j \neq i$, $M_j \cong A_j^{n_j}$, where 
$$n_j = \sum_{\gamma: i = i_0 \rightarrow i_1 \rightarrow \ldots \rightarrow i_{r+1} = j \text{ is a path }i \rightsquigarrow j} (\dimk M).(\dimk A_{i_1}).\ldots.(\dimk A_{i_r}) $$
In particular, if there is no path going from $i$ to $j$, $M_j = 0$.
\end{remark}

We finish this subsection with the following result. 
\begin{proposition}
\label{prop:cone is exact}
Given $i \in \Gamma_0$, the cone functor $\Co_i : \md A_i \rightarrow \md \Lambda$  is exact.
\end{proposition}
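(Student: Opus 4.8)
The functor $\Co_i$ is by definition the extension of scalars $M \mapsto M \otimes_{A_{\Aa}} \Lambda$ along the subalgebra inclusion $A_{\Aa} \hookrightarrow \Lambda$, where an $A_i$-module is regarded as an $A_{\Aa}$-module concentrated at the vertex $i$. Since the tensor product is always right exact, $\Co_i$ is automatically right exact, so the entire content of the statement is that it also preserves monomorphisms. Abstractly this is the flatness of $\Lambda$ as a left $A_{\Aa}$-module; using that $1_i$ is a central idempotent of $A_{\Aa}$ one could reduce it to the flatness of $1_i\Lambda$ as a left $A_i$-module, via the identification $M \otimes_{A_{\Aa}} \Lambda \cong M \otimes_{A_i} 1_i\Lambda$. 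The plan, however, is to avoid a direct flatness computation and instead read exactness off the explicit description of the cone already obtained in Proposition~\ref{prop:formato_dos_cones}.

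The first step is to note that exactness in $\md \Lambda$ may be checked vertex by vertex. Indeed, in the proof of Theorem~\ref{th:rep_icnlp} it was seen that for a morphism of representations $f = (f_i)$ the associated $\Lambda$-module map satisfies $F(f) = \bigoplus_{j \in \Gamma_0} f_j$ (its $(i,j)$-coordinate is $\delta_{ij} f_i$). Hence a sequence of $\Lambda$-modules is exact if and only if, for every $j \in \Gamma_0$, the corresponding sequence of the components $(-)_j$ is exact as a sequence of $k$-vector spaces. So I would fix a short exact sequence $0 \to M' \to M \to M'' \to 0$ in $\md A_i$ and examine each component of the candidate sequence $0 \to \Co_i(M') \to \Co_i(M) \to \Co_i(M'') \to 0$ separately.

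At the vertex $i$ itself, Proposition~\ref{prop:formato_dos_cones} gives $\Co_i(M)_i = M$ naturally in $M$, so this component is just the original sequence and is exact by hypothesis. At a vertex $j \neq i$, the same proposition presents $\Co_i(M)_j$ as the free $A_j$-module on the classes of the decorated paths $m_p\,\gamma_1 a_{i_2}^{s(\gamma_2)}\ldots a_{i_r}^{s(\gamma_r)}\gamma_r$, where $m_p$ runs over a basis of $M$ while the path $\gamma_1\ldots\gamma_r$ from $i$ to $j$ and the intermediate algebra elements run over data in which $M$ plays no role. The key observation is that this generating set factors as (a basis of $M$) times (a fixed index set $U_j$ depending only on $\Gamma$, $\Aa$, $I$ and $j$), yielding a natural isomorphism $\Co_i(M)_j \cong M \otimes_k U_j \otimes_k A_j$, with $U_j$ the $k$-span of the surviving path data, under which $\Co_i(f)_j$ corresponds to $f \otimes \mathrm{id}$. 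Granting this, each component functor $M \mapsto \Co_i(M)_j$ is isomorphic to $(-) \otimes_k U_j \otimes_k A_j$, which is exact because tensoring over the field $k$ is exact; so every component of the sequence is exact, and therefore so is the sequence of $\Lambda$-modules.

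I expect the only genuinely delicate point to be the verification underlying that factorization, namely that reducing modulo the relation ideal $(\Aa(I))$ never mixes the tensor factor $M$ with the path data. This is plausible and in fact built into the construction, since $(\Aa(I))$ lives entirely inside $\Lambda$ and is generated by decorated paths that do not involve $M$: passing to classes can only cut down the index set $U_j$, leaving it independent of $M$. Making this precise — that $U_j$ can be chosen once and for all and that the isomorphism $\Co_i(M)_j \cong M \otimes_k U_j \otimes_k A_j$ is natural in $M$ even after imposing the relations — is where I would spend the care, as it is exactly what reduces the component functors to exact $k$-linear tensor functors.
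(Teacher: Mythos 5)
Your proof is correct, and its skeleton — right exactness comes for free from the tensor product, exactness can be tested vertex by vertex, and Proposition~\ref{prop:formato_dos_cones} supplies the component description — is the same as the paper's. Where you genuinely diverge is the key linear-algebra step. You prove exactness of each component functor by producing a natural isomorphism $\Co_i(M)_j \cong M \otimes_k U_j \otimes_k A_j$ with $U_j$ independent of $M$; as you yourself flag, this needs a supplementary verification, since Proposition~\ref{prop:formato_dos_cones} describes $(\Co_i(M))_j$ only for a fixed $M$ with a chosen basis and does not by itself assert that the index set factors, naturally in $M$, as (basis of $M$) $\times$ (fixed path data) — one must argue that the subspace killed by $(\Aa(I))$ has the form $M \otimes_k V_j$, using that the ideal is generated by decorated relations carrying no $A_i$-coefficient other than one absorbed into $M$. (This is true, but it is an argument about the ideal, not a citation; also, a minor imprecision: relations such as commutativity relations \emph{identify} decorated paths rather than delete them, so $U_j$ is a quotient of the path data, not literally a ``surviving subset.'') The paper sidesteps exactly this point with a choice-of-bases trick: since right exactness is automatic, it suffices to send monomorphisms to monomorphisms, so for a mono $f\colon M \to N$ it fixes a basis $\{m_1,\dots,m_r\}$ of $M$, extends the linearly independent set $\{f(m_1),\dots,f(m_r)\}$ to a basis of $N$, and applies Proposition~\ref{prop:formato_dos_cones} to $M$ and $N$ with these compatible bases; then $(\Co_i(f))_j$ visibly carries a basis of the free $A_j$-module $(\Co_i(M))_j$ injectively into a basis of $(\Co_i(N))_j$, hence is a (split) monomorphism. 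So the paper's route buys freedom from any naturality claim — the proposition is invoked pointwise, for bases adapted to $f$ — while your route, once the factorization is honestly justified, buys a stronger structural statement (each component functor is literally $(-)\otimes_k U_j \otimes_k A_j$), from which exactness, and indeed exactness of arbitrary sequences, follows in one stroke.
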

\begin{proof}
By definition, $\Co_i \doteq I_i(-) \otimes_{A_{\Aa}} \Lambda$. Since the inclusion functor $I_i: \md A_i \rightarrow \md \Lambda$ is easily seen to be exact and a tensor product $- \otimes_{A_{\Aa}} \Lambda$ is always right exact, $\Co_i$ is right exact. Our work here is to prove that $\Co_i$ maps monomorphisms to monomorphisms, because then $\Co_i$ will also be left exact and thus exact, concluding the proof. So let $f: M \rightarrow N$ be a monomorphism between $A_i$-modules. Then it is sufficient to fix $j \in \Gamma_0$ and prove that $(\Co_i(f))_j:(\Co_i(M))_j \rightarrow (\Co_i(N))_j$ is a monomorphism of $A_j$-modules. \\ 
If there is no path $i \rightsquigarrow j$ in $\Gamma$, then we know that $(\Co_i(f))_j$ will be a zero map between two zero modules and thus a monomorphism. So we may suppose that there are paths of the form $i \rightsquigarrow j$ in $\Gamma$. \\
Then, if $\{m_1,\ldots,m_r\}$ is a $k$-basis of $M$, the set $\{f(m_1),\ldots,f(m_r)\} \subset N$ will be linearly independent. Therefore, if we denote $f(m_l) = n_l$ for every $l$, we can complete this set to a $k$-basis of $N$: $\{n_1,\ldots,n_r,\ldots,n_s\}$. Also, for every vertex $l$, let $\{a_1^l,a_2^l\ldots,a_{n_l}^l\}$ be a $k$-basis of $A_j$. \\ 
Let $\gamma: i = l_0 \rightarrow l_1 \rightarrow \ldots \rightarrow l_t = j$ be a path between $i$ and $j$ in $\Gamma$. Then we denote
$$\theta_{\gamma,h,i_1,\ldots,i_{t-1}} = m_h \otimes \overline{\gamma_1 a_{i_1}^{l_1} \gamma_2 \ldots \gamma_{t-1} a_{i_{t-1}}^{l_{t-1}}} \in \Co_i(M)$$
$$\zeta_{\gamma,h,i_1,\ldots,i_{t-1}} = n_h \otimes \overline{\gamma_1 a_{i_1}^{l_1} \gamma_2 \ldots \gamma_{t-1} a_{i_{t-1}}^{l_{t-1}}} \in \Co_i(N)$$
And we note that 
\begin{align*}
\Co_i(f)(\theta_{\gamma,h,i_1,\ldots,i_{t-1}}) &= (f \otimes 1_{\Lambda})(m_h \otimes \overline{\gamma_1 a_{i_1}^{l_1} \gamma_2 \ldots \gamma_{t-1} a_{i_{t-1}}^{l_{t-1}}}) \\
&= f(m_h) \otimes \overline{\gamma_1 a_{i_1}^{l_1} \gamma_2 \ldots \gamma_{t-1} a_{i_{t-1}}^{l_{t-1}}} \\
&= n_h \otimes \overline{\gamma_1 a_{i_1}^{l_1} \gamma_2 \ldots \gamma_{t-1} a_{i_{t-1}}^{l_{t-1}}} = \zeta_{\gamma,h,i_1,\ldots,i_{t-1}}
\end{align*}
By Proposition~\ref{prop:formato_dos_cones}, we know that $(\Co_i(M))_j$ is the free $A_j$-module generated by the $\theta_{\gamma,h,i_1,\ldots,i_{t-1}}$, while $(\Co_i(N))_j$ is the free $A_j$-module generated by the $\zeta_{\gamma,h,i_1,\ldots,i_{t-1}}$. So $(\Co_i(f))_j$, which is a restriction of $\Co_i(f)$, is a morphism that takes a  basis of $(\Co_i(M))_j$ to a subset of a  basis of $(\Co_i(N))_j$. Therefore, it must be a monomorphism, concluding the proof.
\end{proof}

\subsection{Dual cones} 

We now dualize the notion of cone. 

\begin{definition}
Let $i \in \Gamma_0$, and let $M$ be an $A_i$-module. Then $D(M)$ is an $A_i^{op}$-module, and therefore the cone $\Co_i(DM)$ is a $\Lambda^{op}$-module. Finally, $D(\Co_i(DM))$ is a $\Lambda$-module, which we call \textbf{dual cone} of $M$. We shall use the notation $\dCo_i(M) \doteq D(\Co_i(DM))$.
\end{definition}
\begin{proposition}
\label{prop:dual cones x sum}
Given two $A_i$-modules $M$ and $N$, $\dCo_i(M \oplus N) \cong \dCo_i(M) \oplus \dCo_i(N)$.
\end{proposition}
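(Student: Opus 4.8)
The plan is to reduce the statement about dual cones to the analogous statement for ordinary cones, which was already established in Proposition~\ref{cone x direct_sum}, using the fact that the duality functor $D$ commutes with finite direct sums. The key observation is that by definition $\dCo_i(-) = D(\Co_i(D(-)))$, so the claim is really a composition of three functors applied to $M \oplus N$, and each of these functors interacts well with direct sums.

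First I would write out the definition and apply the functors one layer at a time. Starting from $\dCo_i(M \oplus N) = D(\Co_i(D(M \oplus N)))$, I would use that $D = \Hom_k(-,k)$ is additive, hence $D(M \oplus N) \cong D(M) \oplus D(N)$ as $A_i^{op}$-modules. Applying $\Co_i$ to this and invoking Proposition~\ref{cone x direct_sum} (which gives $\Co_i(X \oplus Y) \cong \Co_i(X) \oplus \Co_i(Y)$, now for $A_i^{op}$-modules over $\Lambda^{op}$), I obtain $\Co_i(D(M) \oplus D(N)) \cong \Co_i(D(M)) \oplus \Co_i(D(N))$. Finally, applying $D$ once more and using its additivity again yields
$$
D(\Co_i(D(M)) \oplus \Co_i(D(N))) \cong D(\Co_i(D(M))) \oplus D(\Co_i(D(N))) = \dCo_i(M) \oplus \dCo_i(N).
$$
Chaining these isomorphisms gives the result.

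I do not anticipate a genuine obstacle here, since every step is a formal consequence of additivity of $D$ together with the already-proven Proposition~\ref{cone x direct_sum}. The only points requiring a word of care are that the intermediate direct-sum decomposition takes place in the correct module category at each stage (one passes from $\md A_i$ to $\md A_i^{op}$, then to $\md \Lambda^{op}$, and back to $\md \Lambda$ via $D$), and that all isomorphisms produced are isomorphisms of $\Lambda$-modules, not merely of $k$-vector spaces; this follows because $D$, being the standard $k$-duality functor, carries module morphisms to module morphisms and its additivity isomorphisms are natural. Thus the proof is essentially a one-line computation once Proposition~\ref{cone x direct_sum} and the additivity of $D$ are in hand.
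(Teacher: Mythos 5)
Your proof is correct and follows essentially the same route as the paper: the paper's own argument also reduces the claim to additivity of the duality functor $D$ together with Proposition~\ref{cone x direct_sum} applied to the cone functor. Your version merely spells out the intermediate categories ($\md A_i^{op}$, $\md \Lambda^{op}$) more explicitly, which is a fine level of care but not a different method.
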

\begin{proof}
This follows because the duality functor preserves direct sums and because $\Co_i$ also preserves direct sums due to Proposition~\ref{cone x direct_sum}.
\end{proof}

\begin{example}
Let us give an example to illustrate the differences between the three ways of realizing an $A_i$-module as a $\Lambda$-module seen in this section. \\
Let $A$ and $B$ be two finite dimensional algebras over the base field $k$. Suppose that $A$ has dimension 2 over $k$ and that $B$ has dimension 3. Consider the gbp-algebra $\Lambda$ given below:
\begin{displaymath}
\xymatrix{
&&B \ar[r]&A \\
B \ar[r]&k \ar[ur] \ar[r]_{\alpha} &A \ar[r]_{\beta}&B }
\end{displaymath}
bound by $\alpha \beta = 0$. Let $x$ be the vertex of the quiver above to which $k$ was assigned. If we consider $k^4$ as a $\Lambda$-module via the inclusion functor relative to $x$, its representation will be

\begin{displaymath}
\xymatrix{
&&0 \ar[r]&0 \\
0 \ar[r]&k^4 \ar[ur] \ar[r] &0 \ar[r]&0 }
\end{displaymath}
By using Proposition~\ref{prop:formato_dos_cones} above, one concludes that the representation of $\Co_{x}(k^4)$, which is the cone of $k^4$, will be

\begin{displaymath}
\xymatrix{
&&B^4 \ar[r]& A^{12} \\
0 \ar[r]&k^4 \ar[ur] \ar[r] &A^4 \ar[r]&0 }
\end{displaymath}
The bottom right vertex needs to be assigned with 0 as a consequence of the existence of the relation $\alpha \beta =0$. Note how the representation of $\Co_{x}(k^4)$ resembles a cone whose vertex is $x$ and whose basis is the set of vertices which are the end of non-zero paths starting at $x$. This is to complement our previous remark explaining why we are calling the functor $\Co_x$ a cone. Finally, the dual cone $\dCo_x(k^4)$ of $k^4$ will be given by 

\begin{displaymath}
\xymatrix{
&&0 \ar[r]& 0 \\
B^4 \ar[r]&k^4 \ar[ur] \ar[r] &0 \ar[r]&0 }
\end{displaymath}
\end{example}

\begin{remark} 
\label{obs:calculating dual cones}
We gave above a description of the representations associated with cones. That is, we already know how to calculate cones. Thanks to Proposition~\ref{prop:dual of a repr}, calculating dual cones will not present a difficulty any bigger: given an $A_i$-module $M$, we calculate the cone of $DM$ over $(\Gamma^{op},\Aa^{op},I^{op})$ and then obtain the dual cone of $M$ over $(\Gamma,\Aa,I)$ using Proposition~\ref{prop:dual of a repr}. This propositions tells us that what we need to do is to take the duals of the modules in each vertex and take the transpose linear transformation in each arrow, which, in practical situations, is done by transposing matrices. We shall yield examples of this in Subsection~\ref{subsec:injectives}.
\end{remark}

\section{Projective and injective representations}

We shall now apply the results of the previous subsection to describe the indecomposable projective  and injective $\Lambda$-modules. We remark that \cite{LY} contains a description of projective modules over generalized path algebras, although here we manage to extend this to the context of gbp-algebras.

\subsection{Projective representations}
\label{subsec:projective}
We start with the following result. 

\begin{proposition}
\label{cone x projective}
If $P$ is a projective $A_i$-module, then $\Co_i(P)$ is a projective $\Lambda$-module.
\end{proposition}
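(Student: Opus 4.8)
The plan is to show that the cone functor $\Co_i$ preserves projectivity by reducing to the case of free modules and then using that $\Co_i$ commutes with direct sums and sends the regular $A_i$-module to a projective $\Lambda$-module. The cleanest route exploits the tensor-product definition $\Co_i(M) = \I_i(M) \otimes_{A_{\Aa}} \Lambda$ directly, rather than passing through the explicit representation in Proposition~\ref{prop:formato_dos_cones}.

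\textbf{Key steps.} First I would reduce to the regular module. Any projective $A_i$-module $P$ is a direct summand of a free module $A_i^{(n)}$, and by Proposition~\ref{cone x direct_sum} the functor $\Co_i$ preserves finite direct sums; since a direct summand of a projective is projective, it suffices to prove that $\Co_i(A_i)$ is a projective $\Lambda$-module, and more generally that $\Co_i$ sends projectives to projectives once we handle the generator. Second, I would identify $\Co_i(A_i)$ explicitly. Viewing $A_i$ as the $A_{\Aa}$-module $1_i \cdot A_{\Aa}$ (the component supported at the vertex $i$), we get
$$\Co_i(A_i) = (1_i A_{\Aa}) \otimes_{A_{\Aa}} \Lambda \cong 1_i \cdot \Lambda,$$
which is exactly the indecomposable projective $\Lambda$-module associated to the idempotent $1_i$, hence projective. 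Third, for a general projective $P$, writing $P \oplus Q \cong A_i^{(n)}$ gives $\Co_i(P) \oplus \Co_i(Q) \cong \Co_i(A_i)^{(n)} \cong (1_i \Lambda)^{(n)}$ by the two previous steps, so $\Co_i(P)$ is a direct summand of a projective $\Lambda$-module and therefore projective.

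\textbf{Alternative route.} If one prefers to avoid the idempotent identification, the exactness established in Proposition~\ref{prop:cone is exact} gives a slick argument: projectivity of $P$ means $\Hom_{A_i}(P,-)$ is exact, and one wants $\Hom_{\Lambda}(\Co_i(P),-)$ exact. Using the adjunction between extension of scalars $-\otimes_{A_{\Aa}}\Lambda$ and restriction, together with the fact that $\I_i$ realizes $A_i\md$ inside $A_{\Aa}\md$ as the summand supported at $i$, one obtains a natural isomorphism $\Hom_\Lambda(\Co_i(P),X) \cong \Hom_{A_i}(P, X_i)$ for $\Lambda$-modules $X$, where $X_i$ is the $i$-th component of the associated representation; since $X \mapsto X_i$ is exact by Theorem~\ref{th:rep_icnlp} and $\Hom_{A_i}(P,-)$ is exact, the composite is exact.

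\textbf{Main obstacle.} The genuine subtlety is the identification $\Co_i(A_i) \cong 1_i\Lambda$, which requires care about how $A_i$ is regarded as an $A_{\Aa}$-module and how the tensor product collapses; one must verify that $1_i A_{\Aa} \otimes_{A_{\Aa}} \Lambda \cong 1_i \Lambda$ respects the right $\Lambda$-action and that this $1_i\Lambda$ is genuinely projective over the \emph{quotient} $\Lambda = k(\Gamma,\Aa)/(\Aa(I))$ and not merely over $k(\Gamma,\Aa)$ — but this is automatic since $1_i\Lambda$ is a direct summand of $\Lambda$ as a right module over itself. The adjunction route shifts the difficulty to checking that the Hom-adjunction isomorphism is compatible with the restriction-to-$i$ functor from Theorem~\ref{th:rep_icnlp}, which is where the bulk of the bookkeeping would lie.
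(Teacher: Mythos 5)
Your proposal is correct, but your primary argument takes a genuinely different route from the paper's. The paper's proof is precisely the adjunction argument you sketch only as your \emph{alternative}: since $A_i = 1_i A_{\Aa}$ with $1_i$ idempotent, $P$ is projective over $A_{\Aa}$, and $\Hom_\Lambda(P\otimes_{A_{\Aa}}\Lambda,-)\cong\Hom_{A_{\Aa}}(P,\Hom_\Lambda(\Lambda,-))$ is then a composite of exact functors, so $\Co_i(P)=P\otimes_{A_{\Aa}}\Lambda$ is projective --- in essence the general fact that extension of scalars along $A_{\Aa}\subseteq\Lambda$ preserves projectivity. Your main route instead reduces to the regular module: $\Co_i$ commutes with finite direct sums (Proposition~\ref{cone x direct_sum}), the idempotent identification gives $\Co_i(A_i)=1_iA_{\Aa}\otimes_{A_{\Aa}}\Lambda\cong 1_i\Lambda$, a direct summand of $\Lambda$ as a right $\Lambda$-module, and hence $\Co_i(P)$, being a summand of $(1_i\Lambda)^n$, is projective. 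Both arguments are complete. Yours is more elementary (only idempotents and additivity, no adjunction) and its key identification $\Co_i(A_i)\cong 1_i\Lambda$ anticipates the paper's later Proposition~\ref{prop:description_proj}, of which it is the direct sum over $j$ of the isomorphisms $\Co_i(P_i^j)\cong\overline{e_{ij}}\Lambda$; the paper's route buys brevity and generality, working verbatim for any ring extension without computing any cone. One small correction to your alternative route: what that argument needs is not the exactness of $\Co_i$ from Proposition~\ref{prop:cone is exact}, but exactness of the restriction functor $X\mapsto X_i=X\cdot 1_i$, which holds simply because $1_i$ is an idempotent; with that replacement, your isomorphism $\Hom_\Lambda(\Co_i(P),X)\cong\Hom_{A_i}(P,X_i)$ is exactly the paper's adjunction step with $\Hom_\Lambda(\Lambda,X)$ identified componentwise.
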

\begin{proof}
Let $g: M \rightarrow N$ be an epimorphism of $\Lambda$-modules. Since $\Lambda$ is a projective $\Lambda$-module,
$$\Hom_{\Lambda}(\Lambda,g):\Hom_{\Lambda}(\Lambda,M) \rightarrow \Hom_{\Lambda}(\Lambda,N)$$
is an epimorphism. Since $A_i = 1_i A_{\Aa}$ and $1_i$ is an idempotent element of $A_{\Aa}$, $A_i$ is a projective $A_{\Aa}$-module. By hypothesis, $P$ is a direct summand of some $A_i$-module of the form $A_i^m$, with $m \in \mathbb{N}$, and thus also $P$ is projective as an $A_{\Aa}$-module. It follows that
$$\Hom_{A_{\Aa}}(P,\Hom_{\Lambda}(\Lambda,g)):\Hom_{A_{\Aa}}(P,\Hom_{\Lambda}(\Lambda,M)) \rightarrow \Hom_{A_{\Aa}}(P,\Hom_{\Lambda}(\Lambda,N))$$
is an epimorphism. Finally, by the Adjunction Theorem,
$$\Hom_{\Lambda}(P \otimes_{A_{\Aa}} \Lambda,g) : \Hom_{\Lambda}(P \otimes_{A_{\Aa}} \Lambda,M) \rightarrow \Hom_{\Lambda}(P \otimes_{A_{\Aa}} \Lambda,N)$$
is an epimorphism. This proves that $P \otimes_{A_{\Aa}} \Lambda$ is a projective $\Lambda$-module.
\end{proof}

Now, for each $i \in \Gamma_0$, let $E_i = \{e_{i1},\ldots,e_{is_i}\}$ be a complete set of primitive idempotent and pairwise orthogonal elements in $A_i$. Then every indecomposable projective $A_i$-module is isomorphic to $P_i^j \doteq e_{ij}A_i$ for some $1 \leq j \leq s_i$. Moreover, $E = \{\overline{e_{ij}}:i \in \Gamma_0,1 \leq j \leq s_i\}$ is a complete set of primitive idempotent and pairwise orthogonal elements in $\Lambda$. Therefore every indecomposable projective $\Lambda$-module is isomorphic to $P(i,j) \doteq \overline{e_{ij}} \Lambda$ for a certain pair of indexes $i \in \Gamma_0$ e $1 \leq j \leq s_i$.

\begin{proposition}
\label{prop:description_proj}
For each $i \in \Gamma_0$ and $1 \leq j \leq s_i$, $P(i,j) = \Co_i(P_i^j)$.
\end{proposition}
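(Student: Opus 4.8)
The plan is to compute $\Co_i(P_i^j)$ directly from its definition as an extension of scalars and to recognize the result as the right ideal $\overline{e_{ij}}\Lambda = P(i,j)$. Recall that $\Co_i(P_i^j) = P_i^j \otimes_{A_{\Aa}} \Lambda$, where $P_i^j = e_{ij}A_i$ is viewed as an $A_{\Aa}$-module through the inclusion functor, that is, with $A_{\Aa}$ acting via the projection $A_{\Aa} \twoheadrightarrow A_i$ followed by right multiplication in $A_i$.

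First I would identify $P_i^j$, as an $A_{\Aa}$-module, with a right ideal of $A_{\Aa}$. Since $1_i$ is the idempotent of $A_{\Aa}$ corresponding to the factor $A_i$, we have $A_i = 1_i A_{\Aa}$, and $e_{ij}$ may be regarded as an idempotent of $A_{\Aa}$ (equal to $e_{ij}$ in the $i$-th coordinate and $0$ elsewhere). Then $e_{ij}A_{\Aa} = e_{ij}1_i A_{\Aa} = e_{ij}A_i$, and the right $A_{\Aa}$-module structure on the ideal $e_{ij}A_{\Aa}$ is precisely the one that the inclusion functor assigns to $P_i^j = e_{ij}A_i$. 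Hence $P_i^j \cong e_{ij}A_{\Aa}$ as $A_{\Aa}$-modules.

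The key step is then the standard ``idempotent'' identity for extension of scalars along the ring homomorphism $\iota\colon A_{\Aa}\to\Lambda$ (the composite $A_{\Aa}\hookrightarrow k(\Gamma,\Aa)\twoheadrightarrow\Lambda$): for an idempotent $e$ one has a natural isomorphism of right $\Lambda$-modules
$$eA_{\Aa}\otimes_{A_{\Aa}}\Lambda \;\cong\; e\bigl(A_{\Aa}\otimes_{A_{\Aa}}\Lambda\bigr)\;\cong\; \iota(e)\Lambda,$$
given on generators by $ea\otimes\lambda\mapsto \overline{ea}\,\lambda$. Applying this with $e=e_{ij}$ yields
$$\Co_i(P_i^j)=e_{ij}A_i\otimes_{A_{\Aa}}\Lambda\cong e_{ij}A_{\Aa}\otimes_{A_{\Aa}}\Lambda\cong \overline{e_{ij}}\,\Lambda = P(i,j),$$
as required.

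The only point demanding care --- and the main, though mild, obstacle --- is to verify that the two $A_{\Aa}$-module structures in the second paragraph genuinely coincide, and that all the isomorphisms above are morphisms of right $\Lambda$-modules rather than merely of $k$-vector spaces; once the inclusion-functor action is written out explicitly, this is routine. As an independent sanity check I would note that Proposition~\ref{cone x projective} already guarantees that $\Co_i(P_i^j)$ is projective, and I would compare its top with that of $\overline{e_{ij}}\Lambda$ to confirm that both are the indecomposable projective attached to the primitive idempotent $\overline{e_{ij}}$; this is also consistent with the explicit shape of the cone furnished by Proposition~\ref{prop:formato_dos_cones}.
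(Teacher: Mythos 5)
Your proof is correct, and it reaches the paper's conclusion by a genuinely different (more abstract) route. The paper argues with elements: it applies Remark~\ref{obs:cone as formal compositions} (which rests on $\Gamma$ being acyclic) to write an arbitrary element of $\Co_i(P_i^j)$ as a sum of tensors $m^{\gamma}\otimes\overline{\gamma_1 a^{\gamma}_{e(\gamma_1)}\ldots\gamma_t a^{\gamma}_{e(\gamma_t)}}$ with $m^{\gamma}\in P_i^j=e_{ij}A_i$, writes $m^{\gamma}=e_{ij}a^{\gamma}$, absorbs this across the tensor sign to obtain $\sum\overline{e_{ij}a^{\gamma}\gamma_1 a^{\gamma}_{e(\gamma_1)}\ldots\gamma_t a^{\gamma}_{e(\gamma_t)}}$, and recognizes the resulting set as $\overline{e_{ij}}\Lambda=P(i,j)$. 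You replace this computation by the general base-change lemma $eR\otimes_R S\cong\iota(e)S$ for an idempotent $e$ and a unital ring homomorphism $\iota\colon R\to S$, applied to $R=A_{\Aa}$, $S=\Lambda$, $e=e_{ij}$, after checking that the inclusion-functor $A_{\Aa}$-structure on $P_i^j$ agrees with that of the right ideal $e_{ij}A_{\Aa}$. In substance the two arguments meet at the same point --- in both cases the isomorphism is the multiplication map $e_{ij}a\otimes\lambda\mapsto\overline{e_{ij}a}\,\lambda$ --- but yours is more general: it uses neither acyclicity nor the combinatorics of $\Aa$-paths, only that $\Co_i$ is extension of scalars along $A_{\Aa}\to\Lambda$ and that $e_{ij}A_{\Aa}$ is a direct summand of $A_{\Aa}$ (which is what gives injectivity). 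What the paper's hands-on computation buys in exchange is an explicit description of the elements of $P(i,j)$, which is then combined with Proposition~\ref{prop:formato_dos_cones} to calculate projective representations in examples. Two minor remarks: your argument establishes an isomorphism rather than the literal equality stated, but the paper's does too (its middle ``equality'' is the same canonical identification); and your closing sanity check via Proposition~\ref{cone x projective} and tops is harmless but not needed.
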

\begin{proof}
Using Remark~\ref{obs:cone as formal compositions}, we have that 

\begin{align*}
\Co_i(P_i^j) &= \left\{ \sum_{\substack{\gamma = \gamma_1 \ldots \gamma_t \text{ in } \Gamma \\ s(\gamma_1) = i}} m^{\gamma} \otimes \overline{ \gamma_1 a^{\gamma}_{e(\gamma_1)} \ldots \gamma_t a^{\gamma}_{e(\gamma_t)}} : m^{\gamma} \in P_i^j, a^{\gamma}_{e(\gamma_j)} \in A_{e(\gamma_j)} \right\} \\
&=\left\{ \sum_{\substack{\gamma = \gamma_1 \ldots \gamma_t \text{ in } \Gamma \\ s(\gamma_1) = i}} \overline{e_{ij} a^{\gamma} \gamma_1 a^{\gamma}_{e(\gamma_1)} \ldots \gamma_t a^{\gamma}_{e(\gamma_t)}} : a^{\gamma} \in A_i, a^{\gamma}_{e(\gamma_j)} \in A_{e(\gamma_j)} \right\} \\
&= \overline{e_{ij}}\Lambda = P(i,j)
\end{align*}
\end{proof}

Thanks to the last proposition and Proposition~\ref{prop:formato_dos_cones}, we are now able to calculate the representations associated to projective indecomposable modules. 
The following proposition reflects the particular case of this construction when $I=0$, i.e., when there are no relations:

\begin{proposition}
\label{prop:proj with no relations}
Suppose $I=0$. Let $P(i,j) = ((M_i)_{i \in \Gamma_0},(\phi_{\alpha})_{\alpha \in \Gamma_0})$ be the representation associated to $P(i,j)$. Then, for $l \in \Gamma_0$,
\benu
\item[(a)] If $l = i$, then $M_l = M_i = P_i^j$.
\item[(b)] If $l \neq i$, denote
$$n_l = \sum_{\gamma: i = i_0 \rightarrow i_1 \rightarrow \ldots \rightarrow i_r = l} (\dimk P_i^j)\cdot (\dimk A_{i_1})\cdot \ldots.(\dimk A_{i_{r-1}}) $$
where $\gamma$ runs through all possible paths $i \rightsquigarrow l$. 
\enu
Then $M_l \cong (A_l)^{n_l}$ as $A_l$-modules. In particular, if there are no paths $i \rightsquigarrow l$, then $M_l = 0$.
\end{proposition}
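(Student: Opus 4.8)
The statement is a direct specialization of the general description of cones, so the plan is to reduce everything to results already proved and then perform a straightforward count. First I would invoke Proposition~\ref{prop:description_proj}, which identifies $P(i,j) = \Co_i(P_i^j)$; consequently, computing the representation of the projective $P(i,j)$ amounts to computing the representation of the cone $\Co_i(M)$ over the $A_i$-module $M = P_i^j$. With this reduction in hand I would simply read off the answer from Proposition~\ref{prop:formato_dos_cones}, specialized to the present hypothesis $I = 0$ (which is exactly the situation recorded in Remark~\ref{obs:formato_dos_cones}).

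For part (a) there is nothing to do beyond substitution: Proposition~\ref{prop:formato_dos_cones} asserts outright that $M_i = M$, and taking $M = P_i^j$ gives $M_i = P_i^j$ at once.

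For part (b), with $l \neq i$, Proposition~\ref{prop:formato_dos_cones} tells us that $M_l$ is the free $A_l$-module whose basis is the set of equivalence classes of the formal sequences $m_p \gamma_1 a_{i_2}^{s(\gamma_2)} \ldots a_{i_r}^{s(\gamma_r)} \gamma_r$, where $\gamma_1 \ldots \gamma_r$ ranges over the paths from $i$ to $l$, the index $p$ ranges over a $k$-basis of $P_i^j$, and each intermediate coefficient $a_{i_s}^{s(\gamma_s)}$ ranges over a $k$-basis of the corresponding algebra. Since $I = 0$, no relation identifies any of these classes with $0$, so the only remaining task is to count them. For a fixed path $\gamma \colon i = i_0 \rightarrow i_1 \rightarrow \ldots \rightarrow i_r = l$ the intermediate vertices are $i_1, \ldots, i_{r-1}$, and the number of such sequences is $(\dimk P_i^j)\cdot(\dimk A_{i_1})\cdots(\dimk A_{i_{r-1}})$; summing over all paths $i \rightsquigarrow l$ yields exactly the rank $n_l$ of the statement. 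Hence $M_l \cong (A_l)^{n_l}$, and when there is no path $i \rightsquigarrow l$ the defining sum is empty, so $n_l = 0$ and $M_l = 0$.

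There is no genuine obstacle here, since the result is essentially Remark~\ref{obs:formato_dos_cones} rephrased for the module $P_i^j$. The only point requiring mild care is the bookkeeping in the count of basis elements, together with reconciling the two indexing conventions for paths: Remark~\ref{obs:formato_dos_cones} writes a path from $i$ to $j$ as $i_0 \rightarrow \ldots \rightarrow i_{r+1}$ with the dimension product running over $A_{i_1},\ldots,A_{i_r}$, whereas the present statement writes a path from $i$ to $l$ as $i_0 \rightarrow \ldots \rightarrow i_r$ with the product running over $A_{i_1},\ldots,A_{i_{r-1}}$. In both cases the product is taken over precisely the vertices strictly between the source $i$ and the target, so the two formulas agree.
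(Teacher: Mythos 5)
Your proof is correct and matches the paper's intended argument exactly: the paper states this proposition as an immediate consequence of Proposition~\ref{prop:description_proj} combined with Proposition~\ref{prop:formato_dos_cones} (as specialized in Remark~\ref{obs:formato_dos_cones}), which is precisely your reduction, and your dimension count and reconciliation of the two path-indexing conventions are accurate.
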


In practical examples, however, difficulties may arise either because the matrices of the $k$-linear transformations denoted above as $\phi_{\alpha}$ can be too big, or, given their dependence on the choice of a $k$-basis of the algebras $A_i$ or of $P_i^j$, there could be some confusion. To avoid that, it is convenient to make use of block matrices. We shall give further details of this in the remark and example below.

\begin{remark}
Let $V$ be a $k$-vector space of dimension 1 and fixed basis $\{v\}$ and let $A$ be a $k$-algebra. Then there is a linear map that shall be treated as canonical from now on: it is defined as $\mu : V \rightarrow A$, $\mu(\lambda\cdot v) = \lambda\cdot 1_A$, where $\lambda \in k$. Although the vector space $V$ may vary, the letter $\mu$ will always be used for such a map.
\end{remark}

\begin{example}
\label{ex:projectives_1}
Let $A$ be the path algebra given by the quiver below:
\begin{displaymath}
\xymatrix{
1 \ar[rr] & & 2
}
\end{displaymath}
Then there are two indecomposable projective $A$-modules, namely,
\begin{displaymath}
\xymatrix{
P_1: \hspace{1ex} k \ar[rr]^{id} & & k & &P_2: \hspace{1ex} 0 \ar[rr] & & k
}
\end{displaymath}
Now let $\Lambda$ be the generalized path algebra given by
\begin{displaymath}
\xymatrix{
A \ar[rr] & & A
}
\end{displaymath}

According to the discussions above, there are exactly 4 indecomposable projective $\Lambda$-modules, which are:

\begin{displaymath}
\xymatrix{
P(1,1): \hspace{1ex} P_1 \ar[rr]^(0.6){\left[ \begin{array}{cc}
\mu & 0 \\ 
0 & \mu
\end{array} \right]} & & A^2 & & P(1,2): \hspace{1ex} P_2 \ar[rr]^(0.6){\left[ \begin{array}{c}
\mu 
\end{array} \right]} & & A \\
P(2,1): \hspace{1ex} 0 \ar[rr] & & P_1 & &P(2,2): \hspace{1ex} 0 \ar[rr] & & P_2
}
\end{displaymath}

\end{example}

We also have conditions to describe the representations associated to radicals of the projective modules, as expressed in the proposition below:

\begin{proposition}
\label{prop:radical of a projective}
With the same notations as before, let $i \in \Gamma_0$ and $1 \leq j \leq s_i$. Denote $P(i,j) = ((M_l)_{l \in \Gamma_0},(\phi_{\alpha})_{\alpha \in \Gamma_1})$. Then the radical of $P(i,j)$ is given by the representation $\rad P(i,j) = ((N_l)_{l \in \Gamma_0},(\psi_{\alpha})_{\alpha \in \Gamma_1})$, where $N_i = \rad P_i^j$, $N_l = M_l$ for each $l \in \Gamma_0$ with $l \neq i$, and for each $\alpha \in \Gamma_1$, $\psi_{\alpha} = \phi_{\alpha}\vert_{M_{s(\alpha)}}$.
\end{proposition}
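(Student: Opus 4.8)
The plan is to combine the general identity $\rad P(i,j)=P(i,j)\cdot\rad\Lambda=\overline{e_{ij}}\,\rad\Lambda$ with the functor $G$ from Theorem~\ref{th:rep_icnlp}: once $\rad P(i,j)$ is identified as a submodule of $P(i,j)$, its associated representation is read off by $N_l=(\rad P(i,j))\cdot 1_l$ and $\psi_\alpha(n)=n\cdot\alpha$. Everything thus reduces to computing the spaces $\overline{e_{ij}}\,\rad\Lambda\cdot 1_l$ for each $l\in\Gamma_0$, for which I first need a concrete description of $\rad\Lambda$.

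First I would establish that $\rad\Lambda=J$, where $J$ is the ideal spanned by all $\Aa$-paths of length $\ge 1$ together with the length-$0$ elements lying in $\bigcup_{l\in\Gamma_0}\rad A_l$. On one hand, sending every arrow to $0$ and every $a\in A_l$ to its class modulo $\rad A_l$ defines a surjective algebra homomorphism $\Lambda\rightarrow\prod_{l\in\Gamma_0}A_l/\rad A_l$ (which kills $(\Aa(I))$, since the relations have length $\ge 1$) whose kernel is exactly $J$; as the target is semisimple, this yields $\rad\Lambda\subseteq J$. This is essentially the classification of simple $\Lambda$-modules recalled in Remark~\ref{obs:simple modules}. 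On the other hand $J$ is nilpotent: its arrow part is nilpotent because $\Gamma$ is acyclic and finite, each $\rad A_l$ is nilpotent, and in any sufficiently long product of generators of $J$ either the total arrow length exceeds the maximal path length in $\Gamma$ or some run of length-$0$ factors exhausts the nilpotency of a single $\rad A_l$. Hence $J\subseteq\rad\Lambda$, and equality holds.

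With this in hand, I would compute the components of $\rad P(i,j)=\overline{e_{ij}}\,J$. At the vertex $i$, acyclicity of $\Gamma$ leaves no path $i\rightsquigarrow i$ of length $\ge 1$, so only the length-$0$ part survives and the idempotents force $\overline{e_{ij}}\,J\cdot 1_i=e_{ij}\,\rad A_i=\rad(e_{ij}A_i)=\rad P_i^j$; thus $N_i=\rad P_i^j$. At a vertex $l\neq i$, every $\Aa$-path $i\rightsquigarrow l$ has length $\ge 1$ and so already belongs to the arrow part of $J$, whence $\overline{e_{ij}}\,J\cdot 1_l=\overline{e_{ij}}\Lambda\cdot 1_l=M_l$ and $N_l=M_l$. (Concretely, for an $\Aa$-path $x$ one has $\overline{e_{ij}}x1_l=\overline{e_{ij}}\cdot(\overline{e_{ij}}x1_l)$ with $\overline{e_{ij}}x1_l\in J$.)

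Finally, the structure maps follow formally: since $\rad P(i,j)$ is a $\Lambda$-submodule of $P(i,j)$, each $N_l=(\rad P(i,j))\cdot 1_l$ sits inside $M_l=P(i,j)\cdot 1_l$, and for an arrow $\alpha$ the map $\psi_\alpha(n)=n\cdot\alpha$ is exactly the restriction of $\phi_\alpha$ to $N_{s(\alpha)}$ (which is $M_{s(\alpha)}$ when $s(\alpha)\neq i$ and $\rad P_i^j$ when $s(\alpha)=i$); it lands in $N_{e(\alpha)}$ because a submodule is stable under right multiplication by $\alpha$. I expect the main obstacle to be the description of $\rad\Lambda$ in the second paragraph, and in particular the nilpotency of $J$; the componentwise computation is then routine. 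An alternative avoiding the explicit $\rad\Lambda$ is to use that $\overline{e_{ij}}$ is primitive, identify $\operatorname{top}P(i,j)$ with the simple module $\I_i(\operatorname{top}P_i^j)$ via Remark~\ref{obs:simple modules}, and recover $\rad P(i,j)$ as the kernel of the projection $P(i,j)\rightarrow\I_i(\operatorname{top}P_i^j)$ computed vertex by vertex.
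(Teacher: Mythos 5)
Your proof is correct, but it takes a genuinely different route from the paper's. The paper never computes $\rad\Lambda$: it takes the representation $N$ defined in the statement, notes it satisfies $I$, and observes that the quotient $P(i,j)/N$ is exactly the $A_i$-module $P_i^j/\rad P_i^j$ viewed as a $\Lambda$-module via the inclusion functor, hence simple by Remark~\ref{obs:simple modules}; therefore $N$ is a maximal submodule of $P(i,j)$, and since an indecomposable projective has a unique maximal submodule, $N=\rad P(i,j)$. (This is essentially the alternative you sketch in your final sentence.) Your main argument instead proves the ring-theoretic fact $\rad\Lambda=J$, where $J$ is spanned by the $\Aa$-paths of length $\geq 1$ together with $\bigcup_{l}\rad A_l$, via the semisimple quotient $\prod_l A_l/\rad A_l$ and the nilpotency of $J$ (both steps legitimately use the standing assumptions that $\Gamma$ is finite and acyclic and the $A_l$ are finite dimensional, and your bound on products of generators of $J$ is sound), and then computes $\overline{e_{ij}}\,J\cdot 1_l$ vertex by vertex: at $l=i$ acyclicity kills all positive-length contributions, giving $e_{ij}\rad A_i=\rad P_i^j$, and at $l\neq i$ every $\Aa$-path $i\rightsquigarrow l$ already lies in $J$, giving $N_l=M_l$. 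Both computations are correct, and reading off the structure maps through $G$ is automatic once $\rad P(i,j)$ is identified as the submodule $\overline{e_{ij}}\rad\Lambda$. What your approach buys: the explicit description of $\rad\Lambda$ is of independent interest and immediately yields $\rad M=M\cdot\rad\Lambda$ concretely for \emph{every} module, hence radical filtrations, not just the radicals of indecomposable projectives. What the paper's approach buys: it is considerably shorter, avoids all nilpotency bookkeeping, and reuses only facts already on record (the classification of simple $\Lambda$-modules and the uniqueness of the maximal submodule of an indecomposable projective).
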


\begin{proof}
Let $N = ((N_l)_{l \in \Gamma_0},(\psi_{\alpha})_{\alpha \in \Gamma_1})$. Note that $N$ satisfies $I$ because $M$ satisfies it. We wish to prove that $N = \rad P(i,j)$.
Note that, if $l \neq i$, $N_l = M_l$, so $M_l / N_l = 0$. Moreover, $M_i = P_i^j$ and $N_i = \rad P_i^j$, thus $M_i/N_i = P_i^j / \rad P_i^j$. This implies that $P(i,j)/N$ is isomorphic to the $A_i$-module $P_i^j / \rad P_i^j$ realized as a $\Lambda$-module. Since $P_i^j$ is an indecomposable projective $A_i$-module, $P_i^j / \rad P_i^j$ is a simple $A_i$-module, and it is also simple when seen as a $\Lambda$-module, according to Remark~\ref{obs:simple modules}. This means that $P(i,j)/N$ is a simple $\Lambda$-module.  We have thus proved that $N$ is a maximal submodule of $P(i,j)$, and since $P(i,j)$ is indecomposable projective, it has a unique maximal submodule, which is $\rad P(i,j)$. This concludes the proof that $N = \rad P(i,j)$.
\end{proof}

\begin{example}
We continue Example~\ref{ex:projectives_1} above to apply Proposition~\ref{prop:radical of a projective} and thus obtain the radical of the 4 projective modules seen above. Thus we have:

\begin{displaymath}
\xymatrix{
\rad P(1,1): \hspace{1ex} \rad P_1 \ar[rr]^(0.6){\left[ \begin{array}{c}
0 \\ 
\mu
\end{array} \right]} & & A^2 &  \rad P(1,2): \hspace{1ex} 0 \ar[rr] & & A \\
\rad P(2,1): \hspace{1ex} 0 \ar[rr] & & \rad P_1 &  \rad P(2,2): \hspace{1ex} 0 \ar[rr] & & 0
}
\end{displaymath}

\end{example}

\subsection{Injective representations}
\label{subsec:injectives}

In this subsection we shall give a description of the representations associated with indecomposable injective modules. As we shall see, the injective modules will be particular cases of dual cones, in an analogy with the projective modules, which were particular cases of cones, as we saw in Subsection~\ref{subsec:projective}.

\begin{proposition}
For $i \in \Gamma_0$, if $I$ is an injective $A_i$-module, then $\dCo_i (I)$ is an injective $\Lambda$-module.
\end{proposition}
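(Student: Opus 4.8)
The plan is to reduce the statement to the already-established fact that cones preserve projectivity (Proposition~\ref{cone x projective}), transported through the opposite algebra and the duality functor. The crucial general principle is that $D = \Hom_k(-,k)$ is an exact contravariant equivalence between $\md \Lambda$ and $\md \Lambda^{op}$, and as such it interchanges injectives and projectives: an $A_i$-module $I$ is injective if and only if $DI$ is a projective $A_i^{op}$-module, and a $\Lambda^{op}$-module $P$ is projective if and only if $DP$ is an injective $\Lambda$-module.

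First I would record that, by definition, $\dCo_i(I) = D(\Co_i(DI))$, where the inner cone is taken over the opposite data. By Proposition~\ref{prop:opp_GPA} we have $\Lambda^{op} \cong k(\Gamma^{op},\Aa^{op},I^{op})$, so $\Co_i(DI)$ is genuinely a module over a gbp-algebra of the same type, and Proposition~\ref{cone x projective} applies verbatim to it.

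Then, starting from an injective $A_i$-module $I$, the chain of implications is: $DI$ is a projective $A_i^{op}$-module; hence, by Proposition~\ref{cone x projective} read over $\Lambda^{op}$, the cone $\Co_i(DI)$ is a projective $\Lambda^{op}$-module; hence its dual $D(\Co_i(DI)) = \dCo_i(I)$ is an injective $\Lambda$-module, as desired.

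The only point requiring a little care — and the closest thing to an obstacle — is the bookkeeping around the opposite algebra: one must check that applying $\Co_i$ to $DI$ over $(\Gamma^{op},\Aa^{op},I^{op})$ is exactly what the definition of $\dCo_i$ prescribes, and that Proposition~\ref{cone x projective}, whose proof used only formal properties of the tensor–hom adjunction and of the idempotent $1_i$ inside $A_{\Aa}$, carries over unchanged to the opposite setting. Both are immediate once one observes that the hypotheses of that proposition are symmetric under passing to $\Lambda^{op}$, so no genuinely new computation is needed.
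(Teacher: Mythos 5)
Your proposal is correct and follows essentially the same route as the paper's own proof: dualize $I$ to a projective $A_i^{op}$-module, apply Proposition~\ref{cone x projective} over $\Lambda^{op} \cong k(\Gamma^{op},\Aa^{op},I^{op})$, and dualize back. Your extra remark about the opposite-algebra bookkeeping (via Proposition~\ref{prop:opp_GPA}) is a point the paper leaves implicit, but it is not a different argument.
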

\begin{proof}
Since $I$ is an injective $A_i$-module and $D$ is a duality, $DI$ is a projective $A_i^{op}$-module. Because of Proposition~\ref{cone x projective}, $\Co_i(DI)$ is a projective $\Lambda^{op}$-module, and again since $D$ is a duality, $\dCo_i(I) = D(\Co_i(DI))$ is an injective $\Lambda$-module.
\end{proof}

For each $i \in \Gamma_0$, let $E_i = \{e_{i1},\ldots,e_{is_i}\}$ be a complete set of primitive idempotent and pairwise orthogonal elements in $A_i$. If $D: \md A_i^{op} \rightarrow \md A_i$ is the duality functor, then its well-known that a  complete set of isomorphism classes of indecomposable injective $A_i$-modules is given by $I_i^1 = D(A_i e_{i1}),\ldots,I_i^{s_i} = D(A_i e_{is_i})$.

On the other hand, if $E = \{\overline{e_{ij}}:i \in \Gamma_0, 1 \leq j \leq s_i\}$, then $E$ is a complete set of primitive idempotent and pairwise orthogonal elements in $\Lambda$. This means that a complete set of isomorphism classes of indecomposable injective $\Lambda$-modules is given by $\{I(i,j): i \in \Gamma_0, 1 \leq j \leq s_i \}$, where $I(i,j) \doteq D(\Lambda \overline{e_{ij}})$.

\begin{proposition}
\label{prop:description_inj}
With the notations above, $\dCo_i(I_i^j) \cong I(i,j)$.
\end{proposition}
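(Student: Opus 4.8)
The plan is to unwind the definition $\dCo_i(I_i^j) = D(\Co_i(D I_i^j))$ and reduce the whole statement to the already-established description of indecomposable projectives (Proposition~\ref{prop:description_proj}), applied this time to the opposite algebra. The guiding principle is that the injective story over $\Lambda$ should be the mirror image, under the duality $D$, of the projective story over $\Lambda^{op}$.

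First I would use that $D$ is a duality, so that $D(I_i^j) = D(D(A_i e_{ij})) \cong A_i e_{ij}$, now regarded as a right $A_i^{op}$-module. The key observation is that this module is precisely the indecomposable projective right $A_i^{op}$-module attached to the primitive idempotent $e_{ij}$: since the multiplication of $A_i^{op}$ is reversed, one has $e_{ij} A_i^{op} = A_i e_{ij}$ as right $A_i^{op}$-modules. Thus $D(I_i^j)$ plays, over $A_i^{op}$, exactly the role that $P_i^j = e_{ij}A_i$ plays over $A_i$.

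Next I would apply Proposition~\ref{prop:description_proj} to the gbp-algebra $\Lambda^{op} = k(\Gamma^{op},\Aa^{op},I^{op})$, which is a legitimate gbp-algebra by Proposition~\ref{prop:opp_GPA} and is exactly the algebra over which the cone $\Co_i$ in the definition of the dual cone is formed. That proposition yields
$$\Co_i(D I_i^j) \cong \overline{e_{ij}}\,\Lambda^{op}.$$
Reading the right $\Lambda^{op}$-module $\overline{e_{ij}}\,\Lambda^{op}$ as a left $\Lambda$-module, and again using that multiplication in $\Lambda^{op}$ is reversed, I would identify it with $\Lambda\,\overline{e_{ij}}$. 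Applying $D$ and invoking the definitions then gives
$$\dCo_i(I_i^j) = D(\Co_i(D I_i^j)) \cong D(\Lambda\,\overline{e_{ij}}) = I(i,j),$$
which is the claimed isomorphism.

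The only genuinely delicate point, the rest being formal, is the bookkeeping of the side of the modules: one must track carefully that $D$ sends right $A_i$-modules to right $A_i^{op}$-modules and right $\Lambda^{op}$-modules back to right $\Lambda$-modules, and that the passage $\overline{e_{ij}}\,\Lambda^{op} \leftrightarrow \Lambda\,\overline{e_{ij}}$ is exactly the left/right flip induced by the opposite algebra. Once these identifications are verified to be consistent, the isomorphism drops out directly from Proposition~\ref{prop:description_proj} with no further computation.
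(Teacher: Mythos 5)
Your proof is correct and follows essentially the same route as the paper's: both unwind $\dCo_i(I_i^j) = D(\Co_i(DI_i^j))$, identify $D(I_i^j) \cong A_i e_{ij}$ as the indecomposable projective over $A_i^{op}$, apply Proposition~\ref{prop:description_proj} over the opposite gbp-algebra to get $\Co_i(DI_i^j) \cong \Lambda\overline{e_{ij}}$, and conclude by applying $D$. The only difference is that you spell out the left/right bookkeeping (via $\overline{e_{ij}}\,\Lambda^{op} = \Lambda\,\overline{e_{ij}}$ and Proposition~\ref{prop:opp_GPA}) that the paper leaves implicit in its one-line chain of identifications.
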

\begin{proof} 
$$\dCo_i(I_i^j) = D(\Co_i(D(I_i^j))) = D(\Co_i(D(D(A_i e_{ij})))) \cong D(\Co_i(A_i e_{ij})) = D(\Lambda \overline{e_{ij}}) = I(i,j)$$
where the penultimate equality follows from Proposition~\ref{prop:description_proj}.
\end{proof}

Proposition~\ref{prop:description_inj} gives us a complete description of the indecomposable injective $\Lambda$-modules. In order to calculate these modules in practical examples, we need to combine this description with Remark~\ref{obs:calculating dual cones} above. 

The particular case of when there are no relations is expressed in the following proposition, which is dual to Proposition~\ref{prop:proj with no relations} above:

\begin{proposition}
Suppose $I=0$. Let $I(i,j) = ((M_i)_{i \in \Gamma_0},(\phi_{\alpha})_{\alpha \in \Gamma_0})$ be the representation associated to $I(i,j)$. Then, for $l \in \Gamma_0$,
\benu
\item[(a)] If $l = i$, then $M_l = M_i = I_i^j$.
\item[(b)] If $l \neq i$, denote
$$n_l = \sum_{\gamma: l = i_0 \rightarrow i_1 \rightarrow \ldots \rightarrow i_r = i} (\dimk A_{i_1}).\ldots.(\dimk A_{i_{r-1}})(\dimk I_i^j). $$
where $\gamma$ runs through all possible paths $l \rightsquigarrow i$. Then $M_l \cong (A_l^*)^{n_l}$ as $A_l$-modules, where we denote $A_l^* = D(A_l)$ for brevity. In particular, if there are no paths $l \rightsquigarrow i$, then $M_l = 0$.
\enu
\end{proposition}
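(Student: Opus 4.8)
The plan is to reduce the computation to the already-established cone description via the duality machinery of Subsection~\ref{subsec:opp_alg_duality}, following the recipe of Remark~\ref{obs:calculating dual cones}. By Proposition~\ref{prop:description_inj} we have $I(i,j) \cong \dCo_i(I_i^j) = D(\Co_i(D(I_i^j)))$, so it suffices first to compute the cone $\Co_i(D(I_i^j))$ over the opposite data $(\Gamma^{op},\Aa^{op},I^{op})$ and then to dualize the resulting representation back over $(\Gamma,\Aa,I)$ by means of Proposition~\ref{prop:dual of a repr}. Since we assume $I=0$, also $I^{op}=0$, so the relation-free cone description of Remark~\ref{obs:formato_dos_cones} applies verbatim over $\Gamma^{op}$.

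First I would record that $D(I_i^j)$ is an $A_i^{op}$-module with $\dimk D(I_i^j)=\dimk I_i^j$ (in fact $D(I_i^j)\cong A_ie_{ij}$ by double duality). Feeding this into Remark~\ref{obs:formato_dos_cones} for $\Co_i(D(I_i^j))$ over $\Gamma^{op}$, the module sitting at vertex $i$ is $D(I_i^j)$, while for $l\neq i$ the module at $l$ is the free $A_l^{op}$-module $(A_l^{op})^{m_l}$, where $m_l$ is the sum, over all paths $i\rightsquigarrow l$ in $\Gamma^{op}$, of the products of $\dimk D(I_i^j)$ with the dimensions of the algebras attached to the intermediate vertices of each path. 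The translation step is then purely combinatorial: paths $i\rightsquigarrow l$ in $\Gamma^{op}$ are exactly the reverses of paths $l\rightsquigarrow i$ in $\Gamma$, with the same intermediate vertices. Using $\dimk A_v^{op}=\dimk A_v$ and $\dimk D(I_i^j)=\dimk I_i^j$, the exponent $m_l$ coincides with the number $n_l$ in the statement; in particular $m_l=0$ (so the module is zero) precisely when there is no path $l\rightsquigarrow i$ in $\Gamma$.

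Finally I would dualize via Proposition~\ref{prop:dual of a repr}, which replaces the module at each vertex by its $k$-dual. At vertex $i$ this returns $D(D(I_i^j))\cong I_i^j$, yielding part (a). At a vertex $l\neq i$ it returns $D\bigl((A_l^{op})^{m_l}\bigr)\cong \bigl(D(A_l^{op})\bigr)^{m_l}$, and here the one delicate point arises: the regular right $A_l^{op}$-module is literally the regular left $A_l$-module, so its $k$-dual is the right $A_l$-module $A_l^*=D(A_l)$; hence $M_l\cong (A_l^*)^{n_l}$, which is part (b). I expect this left/right bookkeeping in the dualization --- keeping straight that $D$ interchanges sides and that $D(A_l^{op})$ viewed as a right $A_l$-module is $A_l^*$ --- together with the path-reversal, to be the only genuinely error-prone part; the remainder is a direct transcription of the cone description and is entirely dual to Proposition~\ref{prop:proj with no relations}.
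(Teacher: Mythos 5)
Your proof is correct and follows exactly the route the paper intends: the paper states this proposition without an explicit proof, presenting it as the dual of Proposition~\ref{prop:proj with no relations} obtained by combining Proposition~\ref{prop:description_inj} with Remark~\ref{obs:calculating dual cones} (compute the cone of $D(I_i^j)$ over $(\Gamma^{op},\Aa^{op},I^{op})$ via Remark~\ref{obs:formato_dos_cones}, then dualize using Proposition~\ref{prop:dual of a repr}). Your write-up merely makes explicit the path-reversal bookkeeping and the identification of $D\bigl(A_l^{op}\bigr)$ with $A_l^*$ as right $A_l$-modules, details the paper leaves implicit.
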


\begin{example}

Let $A$ be the path algebra given by the quiver
\begin{displaymath}
\xymatrix{
1 & & 2 \ar[ll] 
}
\end{displaymath}
Then there are 2 indecomposable injective $A$-modules, namely,
\begin{displaymath}
\xymatrix{
I_1: \hspace{1ex} k & & k\ar[ll]_{id} & &I_2: \hspace{1ex} 0 & & k\ar[ll]
}
\end{displaymath}
Now let $\Lambda$ be the generalized path algebra given by
\begin{displaymath}
\xymatrix{
A & & A \ar[ll]
}
\end{displaymath}

We want to calculate the indecomposable injective $\Lambda$-modules. According to the discussions above, we first calculate the indecomposable projective modules over the following generalized path algebra:
\begin{displaymath}
\xymatrix{
A^{op} \ar[rr] & & A^{op}
}
\end{displaymath}

and we note that $A^{op}$ is the path algebra over the following quiver:
\begin{displaymath}
\xymatrix{
1 \ar[rr] & & 2 
}
\end{displaymath} 

In our case, this calculation was already done in Example~\ref{ex:projectives_1}. Therefore it remains only to apply Proposition~\ref{prop:dual of a repr}. Thus the indecomposable injective $\Lambda$-modules are:

\begin{displaymath}
\xymatrix{
I(1,1): \hspace{1ex} I_1 & & & (A^*)^2 \ar[lll]_(0.4){\left[ \begin{array}{cc}
D(\mu) & 0 \\ 
0 & D(\mu)
\end{array} \right]}  & I(1,2): \hspace{1ex} I_2 & & A^* \ar[ll]_(0.4){\left[ \begin{array}{c}
D(\mu)
\end{array} \right]}  \\
I(2,1): \hspace{1ex} 0 & & I_1 \ar[ll] &  & I(2,2): \hspace{1ex} 0 & & I_2 \ar[ll]
}
\end{displaymath}

\end{example}

\backmatter

\bmhead{Acknowledgments}

 The authors gratefully acknowledge financial support by São Paulo Research Foundation (FAPESP), grants \#2018/18123-5, \#2020/13925-6 and  \#2022/02403-4. The second author has also a grant by CNPq (Pq 312590/2020-2).
 
 \section*{Statements and Declarations}

Competing interests: The authors declare they do not have potential interests apart from funding specified above.



\begin{thebibliography}{[HD82]}



\normalsize
\baselineskip=17pt

\bibitem{AC} 
I, Assem, F. U. Coelho, {\it Basic Representation Theory of Algebras}, Graduate Texts in Mathematics {\bf 283}, Springer, 2020, x+311.

\bibitem{ARS} 
M. Auslander, I. Reiten, S. Smal\o, {\it Representation Theory of Artin Algebras}, Cambridge Studies in Advanced Mathematics {\bf 36}, Cambridge University Press, 1995. 


\bibitem{CC1}
V. Chust, F. U. Coelho, {\it On the correspondence between path algebras and generalized path algebras}, Comm. Algebra {\bf 50}(5), 2056-2071 (2022).

\bibitem{CC3}
V. Chust, F. U. Coelho, {\it Homological invariants of generalized bound path algebras}, preprint.

\bibitem{ICNLP}
R. M. I. Cobos, G. Navarro, J. L. Pe\~na, {\it A note on generalized path algebras}, Rev. Roumaine Math. Pures Appl. {\bf 53}(1), 25-36 (2008). 

\bibitem{CLiu} 
F. U. Coelho, S. X. Liu, {\it Generalized path algebras}, in: {\it Interaction between ring theory and representations of algebras}, Lecture Notes in Pure and Applied Mathematics {\bf 210}, Marcel Dekker, 53-66 (2000). 

\bibitem{FLi1}
F. Li, {\it Characterization of left artinian algebras through pseudo path algebras}, J. Aust. Math. Soc. {\bf 83}, 385-416 (2007). 

\bibitem{LY}
F. Li, C. Ye, {\it Gorenstein projective modules over a class of generalized matrix algebras and their applications}, Alg. Repr. Theory {\bf 18}, 693-710 (2015). 



\end{thebibliography}
\end{document}